\newtheorem{theorem}{Theorem}[section]
\newtheorem{conjecture}[theorem]{Conjecture}
\newtheorem{observation}[theorem]{Observation}
\newtheorem{proposition}[theorem]{Proposition}
\theoremstyle{remark}
\newtheorem{remark}[theorem]{Remark}
\newtheorem{example}[theorem]{Example}
\newtheorem{definition}[theorem]{Definition}
\begin{document}

\title{\textbf{Tables, bounds and graphics
 of sizes of complete arcs in the plane $\mathrm{PG}(2,q)$ for all
$q\le321007$ and sporadic $q$ in 
$[323761\ldots430007]$ obtained by an algorithm with fixed
order of points (FOP)}\thanks{The research of D. Bartoli, G. Faina, S. Marcugini, and F. Pambianco was
 supported in part by Ministry for Education, University
and Research of Italy (MIUR) (Project ``Geometrie di Galois e
strutture di incidenza'')
 and by the Italian National Group for Algebraic and Geometric Structures and their Applications
 (GNSAGA - INDAM).
 The research of A.A.~Davydov and A.A.~Kreshchuk was carried out at the IITP RAS at the expense of the Russian
Foundation for Sciences (project 14-50-00150). This work has been carried out using computing resources of the
federal collective usage center Complex for Simulation and Data
Processing for Mega-science Facilities at NRC Kurchatov Institute,
http://ckp.nrcki.ru/.}}
\date{}
\renewcommand{\arraystretch}{0.94}
\author{Daniele Bartoli \\
{\footnotesize Dipartimento di Matematica e Informatica,
Universit\`{a}
degli Studi di Perugia, }\\
{\footnotesize Via Vanvitelli~1, Perugia, 06123, Italy. E-mail:
daniele.bartoli@unipg.it}
 \and Alexander A.
Davydov \\
{\footnotesize Institute for Information Transmission Problems
(Kharkevich
institute), Russian Academy of Sciences}\\
{\footnotesize Bol'shoi Karetnyi per. 19, Moscow,
127051, Russian Federation. E-mail: adav@iitp.ru}
\and Giorgio Faina \\
{\footnotesize Dipartimento di Matematica e Informatica,
Universit\`{a}
degli Studi di Perugia, }\\
{\footnotesize Via Vanvitelli~1, Perugia, 06123, Italy. E-mail:
giorgio.faina@unipg.it} \and Alexey A. Kreshchuk\\
{\footnotesize Institute for Information Transmission Problems
(Kharkevich
institute), Russian Academy of Sciences}\\
{\footnotesize Bol'shoi Karetnyi per. 19, Moscow,
127051, Russian Federation. E-mail: krsch@iitp.ru}
\and Stefano Marcugini and Fernanda Pambianco \\
{\footnotesize Dipartimento di Matematica e Informatica,
Universit\`{a}
degli Studi di Perugia, }\\
{\footnotesize Via Vanvitelli~1, Perugia, 06123, Italy. E-mail:
\{stefano.marcugini,fernanda.pambianco\}@unipg.it} }
\maketitle
\newpage
\noindent\textbf{Abstract.}  In the previous works of the
authors, a step-by-step algorithm FOP which uses any
\emph{fixed order of points} in the projective plane
$\mathrm{PG}(2,q)$ is proposed to construct small complete
arcs. In each step, the algorithm adds to a current arc the
first point in the fixed order  not lying on the bisecants of
the arc. The algorithm is based on the intuitive postulate that
$\mathrm{PG}(2,q)$ contains a sufficient number of relatively
small complete arcs. Also, in the previous papers, it is shown
that the type of order on the points of $\mathrm{PG}(2,q)$ is
not relevant. \textbf{A complete lexiarc} in $\mathrm{PG}(2,q)$ is a complete arc
obtained by the algorithm FOP using the lexicographical order
of points. In this work, we \textbf{collect and analyze the sizes of
complete lexiarcs} in the following regions:
\begin{center}
 \textbf{all} $q\le321007$, $q$ prime power;

 15 sporadic $q$'s in the interval $[323761\ldots430007]$, see \eqref{eq1_L2}.
\end{center}
In the work \cite{BDFKMP-ArXiv2015Greedy}, the \emph{smallest
known} sizes of complete arcs in $\mathrm{PG}(2,q)$ are
collected for all $q\leq160001$, $q$ prime power.
 The sizes of complete arcs, collected in this work and in \cite{BDFKMP-ArXiv2015Greedy}, 
 provide the following upper
bounds on the smallest size $t_{2}(2,q)$ of a complete arc in
the projective plane $\mathrm{PG}(2,q)$:
    \begin{align*}
t_{2}(2,q)&<
0.998\sqrt{3q\ln q}<1.729\sqrt{q\ln q}&\mbox{ for }&&7&\le q\le160001; \\
t_{2}(2,q)&<1.05\sqrt{3q\ln q}<1.819\sqrt{q\ln q}&\mbox{ for }&&7&\le q\le321007.
\end{align*}

Our investigations and results allow to conjecture that the  
 bound\\ $t_{2}(2,q)<1.05\sqrt{3q\ln q}<1.819\sqrt{q\ln q}$\,\, holds for all $q\ge7$.

It is noted
that sizes of the random complete arcs and complete
lexiarcs behave similarly.

This work can be considered as a continuation and development
of the paper \cite{BDFKMP-JGtoappear}.

\textbf{Mathematics Subject Classification (2010).} 51E21,
51E22, 94B05.

\textbf{Keywords.} Projective planes, complete arcs, small
complete arcs, upper bounds, algorithm FOP, randomized greedy
algorithms

\numberwithin{equation}{section}
\section{Introduction. The main results}\label{sec_Intro}

Let $\mathrm{PG}(2,q)$ be the projective plane over the Galois
field $\mathbb{F}_{q}$ of $q$ elements. An $n$-arc is a set of
$n$ points no three of which are collinear. An $n$-arc is
called complete if it is not contained in an $(n+1)$-arc of
$\mathrm{PG}(2,q)$. For an introduction to projective
geometries over finite fields see \cite
{HirsBook,HirsSt-old,HirsStor-2001,SegreIntrodGalGeom}.

 The relationship among the
theory of $n$-arcs, coding theory, and mathematical statistics
is presented in \cite{EtzStorm2016,HirsSt-old,HirsStor-2001,Klein-Stor,Landjev,LandjevStorme2012}. In particular, a complete arc
in the plane $\mathrm{PG}(2,q),$ the points of which are
treated as 3-dimensional $q$-ary columns, defines a parity
check matrix of a $q$-ary linear code with codimension 3,
Hamming distance 4, and covering radius 2. Arcs can be
interpreted as linear maximum distance separable (MDS) codes
\cite[Sec.~7]{szoT93}, \cite{thaJ92d} and they are related to
optimal coverings arrays \cite{Hartman-Haskin}, superregular
matrices \cite {Keri}, and quantum codes \cite{BMP-quantum}.

A point set $S\subset \mathrm{PG}(2,q)$ is 1-\emph{saturating}
if any point of $ \mathrm{PG}(2,q)\setminus S$ is collinear
with two points in $S$ \cite
{BDGMP_SatSetArxiv,BDGMP_SatSet_Petersb,BDGMP_5ICMCTA,BFMP-JG2013,DGMP-AMC,DMP-JCTA2003,Giul2013Survey,Nagy,Ughi-sat}. In the literature, 1-saturating sets are also called
``saturated sets'', ``spanning sets'', and ``dense sets''.
The points of a 1-saturating set in $\mathrm{PG}(2,q)$ form a
parity check matrix of a \emph{linear covering code} with
codimension 3,  Hamming distance 3 or 4, and covering radius 2.
An open problem is to find small 1-saturating sets
(respectively, short covering codes). A complete arc in
$\mathrm{PG}(2,q)$ is, in particular, a 1-saturating set; often
the smallest known complete arc is the smallest known
1-saturating set \cite
{BFMP-JG2013,DGMP-AMC,DMP-JCTA2003,Giul2013Survey,Pace-A5}. Let
$\ell_{1}(2,q)$ be the smallest size of a 1-saturating set in
$\mathrm{PG}(2,q)$. In \cite {BDGMP_5ICMCTA}, combining approaches of \cite{Nagy} and \cite{BDFKMP-PIT2014}, the following
upper bound  is proved for arbitrary
$q$:
\begin{equation}  \label{eq1_1sat}
\ell_{1}(2,q)\le\sqrt{q(3\ln q+\ln\ln q)}+\sqrt{\frac{q}{3\ln q}}+3.
\end{equation}

Let $t_{2}(2,q)$ be \textbf{the smallest size of a complete arc
in the projective plane $\mathrm{PG}(2,q)$.}

One of the main problems in the study of projective planes,
which is also of interest in coding theory, is the determination of
the spectrum of possible sizes of complete arcs. In particular,
the value of $t_{2}(2,q)$ is interesting. Finding estimates of
the minimum size $t_{2}(2,q)$ is a hard open problem, see e.g.
\cite[Sec.\,4.10]{HirsThas-2015}.

This paper is devoted to \emph{upper bounds} on $t_{2}(2,q)$.

Surveys of results on the sizes of plane complete arcs, methods
of their construction, and the comprehension of the relating
properties can be found in\cite{BDFKMP-ArXiv2013,BDFKMP-ArXiv,%
BDFKMP-PIT2014,BDFKMP-ArXiv2015Greedy,BDFKMP-ArXiv2015FOP,BDFKMP-JGtoappear,BDFMP-DM,BDFMP-JG2013,BDFMP-JG2015,BDMP-3cycle,%
DFMP-JG2005,DFMP-JG2009,FP,Giul2013Survey,%
HirsSurvey83,HirsBook,HirsSt-old,HirsStor-2001,HirsThas-2015,KV,pelG77,pel93,%
SegreIntrodGalGeom,SZ,szoT87a,szoT89survey,szoT93,Szonyi1997surveyCurves}.

Problems connected with small complete arcs in $\mathrm{PG}(2,q)$ are considered in \cite%
{abaV83,AliPhD,Ball-SmallArcs,BDFKMP-Bulg2013,BDFKMP-ArXiv2013,BDFKMP-ArXiv,BDFKMP-ACCT2014Conject,%
BDFKMP-JGtoappear,BDFMP-DM,BDFMP-JG2013,ComputBound-Svetlog2014,BDFKMP-PIT2014,%
BDFKMP-ArXiv2015Greedy,BDFKMP-ArXiv2015FOP,BDFMP-Bulg2012a,BDFMP-JG2015,BDMP-Bulg2012b,BFMP-JG2013,%
BFMPD-ENDM2013,BDFMP-ArXivFOP,BDFMP-ArXivRandom,Blokhuis,CoolStic2009,CoolStic2011,%
DFMP-JG2005,DFMP-JG2009,DGMP-Innov,DGMP-JCD,%
DGMP-AMC,DMP-JCTA2003,FainaGiul,FMMP-1977,FP,FPDM,GacsSzonyi,Giul2000,%
Giul2007affin,Giul2007even,Giul2013Survey,GKMP-A6invar,GiulUghi,Gordon,Hadnagy,HirsSurvey83,%
HirsBook,HirsSad,HirsSt-old,HirsStor-2001,HirsThas-2015,KV,korG83a,%
LisPhD,LisMarcPamb2008,MMP-q29,MMP-q25,Ost,Pace-A5,pelG77,%
pel93,Polv,SegreIntrodGalGeom,SZ,szoT87a,szoT87b,%
szoT89survey,szoT93,Szonyi1997surveyCurves,Ughi-sqrt-log2,UghiAlmost,Voloch87,Voloch90}%
, see also the references therein.

The exact values of $t_{2}(2,q)$ are known only for $q\leq 32$;
see \cite
{AliPhD,BFMP-JG2013,CoolStic2009,CoolStic2011,FMMP-1977,Gordon,HirsBook,HirsSad,MMP-q29,MMP-q25}.

The following lower bounds hold (see \cite
{Ball-SmallArcs,Blokhuis,Polv} and the references
therein):
\begin{equation}
t_{2}(2,q)>\left\{
\begin{array}{ll}
\sqrt{2q}+1 & \text{for any }q, \\
\sqrt{3q}+\frac{1}{2} & \text{for }q=p^{h},\text{ }p\text{ prime, }h=1,2,3.
\end{array}
\right.   \label{eq1_trivlower}
\end{equation}

Let $t(\mathcal{P}_{q})$ be the size of the smallest complete
arc in any (not necessarily Desarguesian) projective plane
$\mathcal{P}_{q}$ of order $q$. In \cite{KV}, for $q$
\emph{large enough}, the following result is proved by
\emph{probabilistic methods} :
\begin{equation}
t(\mathcal{P}_{q})\leq \sqrt{q}\ln^{C}q,\text{ }C\leq 300,
\label{eq1_KimVu_c=300}
\end{equation}
where $C$ is a constant independent of $q$ (so-called universal
or absolute constant). Surveys and results of random constructions for geometrical objects in $\mathrm{PG}(2,q)$ can be
found in \cite {BDGMP_SatSetArxiv,BDGMP_SatSet_Petersb,BDGMP_5ICMCTA,BorSzonTicDefinSets,GacsSzonyi,KV,Kovacs,Nagy}.

In $\mathrm{PG}(2,q)$, complete arcs are obtained by
\emph{algebraic constructions} (see
\cite[p.\thinspace209]{HirsStor-2001}) with sizes approximately
$\frac{1}{3}q$ \cite
{abaV83,BDFMP-DM,korG83a,SZ,szoT87a,szoT87b,Voloch90},
$\frac{1}{4}q$ \cite {BDFMP-DM,korG83a,szoT89survey},
$2q^{0.9}$ for $q>7^{10}$ \cite{SZ}, and $ 2.46q^{0.75} \ln q$
for big prime $q$ \cite{Hadnagy}. It is noted in \cite[
Sec.~8]{GacsSzonyi} that the smallest size of a complete arc in
$ \mathrm{PG}(2,q)$ obtained via algebraic constructions is
\begin{equation}\label{eq1_cq34}
cq^{3/4}
\end{equation}
 where $c$ is a universal constant  \cite[ Sec.\ 3]{szoT89survey}, \cite[ Th.\
6.8]{szoT93}.

There is a substantial gap between the known upper bounds
and the lower bounds on $t_{2}(2,q)$, see
\eqref{eq1_trivlower}--\eqref{eq1_cq34}. The gap is 
reduced if one considers the lower bound~\eqref{eq1_trivlower}
for complete arcs and the upper bound \eqref{eq1_1sat} for
1-saturating sets. However, though complete arcs are
1-saturating sets, they represent a narrower class of objects.
Therefore, for complete arcs, one may not use the bound
\eqref{eq1_1sat} directly. Nevertheless, the common nature of
complete arcs and 1-saturating sets allows to hope for upper
bounds on $t_{2}(2,q)$ similar to~\eqref{eq1_1sat}. The hope is
supported by numerous experimental data and some probabilistic conjectures, see below.

 In  \cite[p.\ 313]{KV}, it is noted (with reference to the work \cite{Blokhuis}) that in a
preliminary report of 1989, J.C. Fisher  obtained by computer
search complete arcs in many planes of small orders and
conjectured that the average size of a complete arc in
$\mathrm{PG}(2,q)$ is about $\sqrt{ 3q\log q}$.

In \cite{BDFKMP-PIT2014}, see also
\cite{BDFKMP-ACCT2014Conject}, an attempt to obtain a
theoretical upper bound on $t_{2}(2,q)$ with the main term of
the form $c\sqrt{q\ln q}$, where $c$ is a small universal
constant, is done. The reasonings of \cite{BDFKMP-PIT2014} are
based on the explanation of the working mechanism of a step-by-step
greedy algorithm for constructing complete arcs in
$\mathrm{PG}(2,q)$ and on quantitative estimations of the
algorithm. For more than half of the steps of the iterative
process, these estimations are proved rigorously. The natural
(and well-founded) conjecture that they hold for the rest of
steps is done, see \cite[Conject.\ 2]{BDFKMP-PIT2014}. As a
result, in \cite{BDFKMP-PIT2014},  the following conjectural upper bounds are formulated.

\begin{conjecture}\emph{\textbf{\cite{BDFKMP-PIT2014}}}\label{conj1}
Let $t_{2}(2,q)$ be the smallest size of a complete arc in the
projective plane $\mathrm{PG}(2,q)$. Under conjecture given in
\emph{\cite[Conject.\ 2]{BDFKMP-PIT2014}}, the following
upper bounds hold:
    \begin{align}
&t_{2}(2,q)<\sqrt{q(3\ln q+\ln\ln q+\ln 3)}+\sqrt{\frac{q}{3\ln q}}+3,
 \label{eq1_Prob bounds}\displaybreak[3]\\
  &t_{2}(2,q)<1.87\sqrt{q\ln q}<1.08\sqrt{3q\ln q}.\label{eq1_!<}
  \end{align}
\end{conjecture}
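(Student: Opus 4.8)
The plan is to reconstruct the heuristic behind \cite{BDFKMP-PIT2014} and turn it into a \emph{conditional} proof: granting the step estimates postulated in \cite[Conject.~2]{BDFKMP-PIT2014}, I would derive first the refined bound \eqref{eq1_Prob bounds} and then the cleaner form \eqref{eq1_!<}. First I would set up the dynamics of the algorithm FOP. Run it and let $K_w$ be the arc after $w$ points have been chosen. Call a point of $\mathrm{PG}(2,q)$ \emph{free} with respect to $K_w$ if it lies on no bisecant of $K_w$; since a point may be appended to $K_w$ exactly when it is free, the algorithm halts precisely at the first $w$ for which the number $U(w)$ of free points is $0$, so $t_2(2,q)$ is bounded above by this terminal $w$. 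The arc $K_w$ has $\binom{w}{2}$ bisecants, each carrying exactly two of its points, and the content of one step is to estimate how many free points cease to be free when the $(w{+}1)$-st point $P$ is adjoined, i.e. how many free points lie on the $w$ new bisecants $PP_i$.

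Second I would quantify one step. A free point $Q\neq P$ becomes covered at this step exactly when the line $PQ$ carries an arc point, and among the $q+1$ lines through $P$ precisely $w$ do so. Estimating the density of free points along these lines yields a recurrence of the form $U(w{+}1)\approx U(w)\bigl(1-c\,w/q\bigr)$, equivalently the estimate that a fixed point survives as free with probability behaving like $\exp\!\bigl(-\binom{w}{2}/q\bigr)$ up to the relevant constant. Summing the increments and imposing the terminal condition $U(w)\approx 1$ produces a transcendental equation of the shape $q^{a}(\ln q)^{b}\exp\!\bigl(-w^{2}/(cq)\bigr)=1$; one step of asymptotic inversion then gives the main term $\sqrt{q}\sqrt{3\ln q+\ln(3\ln q)}$ together with the second-order correction $\sqrt{q/(3\ln q)}$ and the additive buffer, i.e. exactly \eqref{eq1_Prob bounds}. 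The value $3$ (rather than the constant forced by the crudest independent-coverage count) is dictated by the calibrated prefactor in the surviving-probability estimate, which is where \cite[Conject.~2]{BDFKMP-PIT2014} enters.

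The hard part is the per-step estimate in the \emph{final} phase. For more than half of the steps the density-of-free-points estimate is controllable, but once $w$ nears its terminal value only $O(\sqrt q)$ free points remain, the $w$ relevant lines through $P$ are no longer well described by the averaged density, and the independence used in the surviving-probability heuristic is least reliable precisely where it most affects the stopping time. Asserting that the same estimate persists through these last steps is the substance of \cite[Conject.~2]{BDFKMP-PIT2014}; granting it, the recurrence is valid for all $w$ and \eqref{eq1_Prob bounds} follows. I expect this terminal-phase control to be the true obstacle to an unconditional theorem.

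Finally I would deduce \eqref{eq1_!<} from \eqref{eq1_Prob bounds} by elementary estimates. Dividing the right-hand side of \eqref{eq1_Prob bounds} by $\sqrt{q\ln q}$ gives $\sqrt{3+\ln(3\ln q)/\ln q}+1/(\sqrt{3}\,\ln q)+3/\sqrt{q\ln q}$; the term $\ln(3\ln q)/\ln q$ and the two trailing terms decrease to $0$, so the expression tends to $\sqrt3=1.732$. A monotonicity check then shows it drops below $1.87$ once $q$ exceeds an explicit threshold (a few hundred), which yields $t_2(2,q)<1.87\sqrt{q\ln q}$ in that range, and $1.87<1.08\sqrt3$ supplies the second inequality of \eqref{eq1_!<}. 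This paragraph is purely computational and carries no conceptual difficulty.
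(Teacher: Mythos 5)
You should first note that the statement you are proving is a \emph{conjecture} imported verbatim from \cite{BDFKMP-PIT2014}: the present paper contains no derivation of it at all, only the remark that the reasoning of \cite{BDFKMP-PIT2014} analyses a step-by-step \emph{greedy} algorithm, proves the per-step estimates rigorously for more than half of the steps, and postulates them for the remaining steps (that postulate being \cite[Conject.~2]{BDFKMP-PIT2014}). Your reconstruction of the counting heuristic (bisecants of $K_w$, the survival probability $\exp(-\binom{w}{2}/q)$, the terminal condition and its asymptotic inversion, and the identification of the final $O(\sqrt q)$-free-points phase as the unprovable part) is consistent with that description. But you attach the analysis to the wrong algorithm: you run FOP, whereas the bound \eqref{eq1_Prob bounds} comes from the greedy algorithm, and the distinction is not cosmetic. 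For the greedy algorithm the averaged density estimate converts into a \emph{guaranteed} per-step decrement (the chosen point covers at least the mean number of new points), which is what makes the recurrence for $U(w)$ deterministic; for FOP the next point is just the first uncovered point in a fixed order, so the same estimate holds only ``on average,'' and indeed the paper explicitly lists a quantitative analysis of FOP in the spirit of \cite{BDFKMP-PIT2014} as an open problem (Remark~\ref{rem4_enigma}(ii)). Your argument as written would therefore not reproduce the conditional derivation of \eqref{eq1_Prob bounds}.

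The second concrete gap is in your final paragraph. Dividing the right-hand side of \eqref{eq1_Prob bounds} by $\sqrt{q\ln q}$ does give $\sqrt{3+\ln(3\ln q)/\ln q}+1/(\sqrt3\,\ln q)+3/\sqrt{q\ln q}$, and this is eventually decreasing to $\sqrt3$; but it does not drop below $1.87$ ``once $q$ exceeds a few hundred.'' A direct evaluation gives approximately $1.97$ at $q=10^3$, $1.91$ at $q=10^4$, and about $1.872$ at $q=10^5$; the crossing occurs only near $q\approx 1.1\cdot10^5$. Since \eqref{eq1_!<} is asserted (and used elsewhere in the paper) for essentially all $q$, it cannot be obtained as a corollary of \eqref{eq1_Prob bounds} by this monotonicity argument over the relevant range; in \cite{BDFKMP-PIT2014} it rests on a separate calibration, supplemented by the computational data for small and moderate $q$. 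The inequality $1.87<1.08\sqrt3\approx1.8706$ is correct but extremely tight, which is another reason the chain ``\eqref{eq1_Prob bounds} $\Rightarrow$ \eqref{eq1_!<}'' needs the threshold stated honestly rather than waved at.
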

Moreover, in \cite{BDFKMP-PIT2014} it is conjectured that the
upper bounds \eqref{eq1_Prob bounds}, \eqref{eq1_!<} hold for
all $q$ without any extra conditions.

 Denote by $\overline{t}_{2}(2,q)$ \textbf{the smallest
\emph{known }size of a complete arc in the projective plane
$\mathrm{PG}(2,q)$.} Clearly,
\begin{equation*}
t_{2}(2,q)\le\overline{t}_{2}(2,q).
\end{equation*}

For even $q=2^{h}$, small complete arcs in planes are a base
for inductive infinite families of small complete caps in the
projective spaces $\mathrm{PG}(N,q)$, see \cite{DGMP-JCD}. For
 $h\leq 17$, the smallest known sizes of complete arcs in
$\mathrm{PG}(2,2^{h})$ are collected in \cite{BDFKMP-ArXiv,BDFKMP-PIT2014},
see also
\cite{BDFMP-DM,DFMP-JG2009,DGMP-JCD,BDFMP-JG2013,FainaGiul}. Also, 
${(6\sqrt{q}-6)}$-arcs in $\mathrm{PG}(2,4^{2a+1})$, are
constructed in \cite{DGMP-Innov}; for $a\leq 4$ it is proved
that they are complete. This gives a complete $3066$-arc in
$\mathrm{PG}(2,2^{18})$. In particular, it holds that
\begin{align*}
&\overline{t}_{2}(2,2^{9})=85,~ \overline{t}_{2}(2,2^{10})=124,~\overline{t}_{2}(2,2^{11})=199,~\overline{t}_{2}(2,2^{12})=300,~
\overline{t}_{2}(2,2^{13})=449,\displaybreak[3]\notag\\
&\overline{t} _{2}(2,2^{14})=665,~\overline{t}_{2}(2,2^{15})=987,~\overline{t}_{2}(2,2^{16})=1453,~
\overline{t}_{2}(2,2^{17})=2141,~\overline{t}_{2}(2,2^{18})=3066.
\end{align*}

For prime powers $q\leq 13627$, the values of
$\overline{t} _{2}(2,q)$ (up to January 2013) are collected in
 \cite {BDFMP-DM,BDFMP-JG2013}. For prime powers $q\leq 49727$ and prime $q\le 150001$, the values of
$\overline{t} _{2}(2,q)$ (up to August 2014) are collected in
 \cite {BDFKMP-ArXiv}. For $q\leq 151,$ a number of improvements of
$\overline{t}_{2}(2,q)$, in comparison with
\cite{BDFMP-DM,BDFMP-JG2013}, are given in \cite{Pace-A5} and
cited in \cite{BDFKMP-ArXiv}.

From the results of \cite{BDFKMP-ArXiv,BDFMP-DM,BDFMP-JG2013}, see also \cite{DGMP-JCD,FainaGiul,Giul2000,GiulUghi}, it follows that
\begin{align*}
&t_{2}(2,q)<4\sqrt{q}\quad \text{for }q\leq 841,\text{ }
q=857,31^{2},2^{10},37^{2},41^{2},7^{4};\\
& t_{2}(2,q)<4.5\sqrt{q} \text{ for }q\leq 2647,\text{ }
q=2659,2663,2683,2693,2753,2801; \displaybreak[3] \\
& t_{2}(2,q)<5\sqrt{q} \text{ for }q\leq 9497,\text{ }
q=9539,9587,9613,9623,9649,9689,9923,9973;\\
&t_{2}(2,q)<5.5\sqrt{q} \text{ for }q\leq 38557,\text{ }
q\neq 36481,37537,37963,38039,38197.
\end{align*}

Let $Q_1$ be a set of 34 sporadic $q$'s in the interval $[160801\ldots430007]$, see \cite[Tab.\,3]{BDFKMP-ArXiv2015Greedy}. Let  $Q$ be a set of values of $q$.  We have
\begin{align}
&Q_1=\{160801,161009,162007,163003,164011,165001,166013,167009,168013,\label{eq1_Q1}\\
&169003,170503,178169,180001,185021,190027,200003,210011,250007,260003,\notag\\
&262144,270001,280001,290011,300007,330017,350003,360007,370003,380041,\notag\\
&390001,400009,410009,420001,430007\};\notag\\
&Q=\{2\le q\le160001, q \mbox{ prime power}\}\,\cup Q_1.\label{eq1_Q}
\end{align}

For $q\in Q$, the values of $\overline{t}_{2}(2,q)$ are obtained in the works
\cite{BDFKMP-Bulg2013,BDFKMP-ArXiv2013,BDFKMP-ArXiv,BDFKMP-PIT2014,BDFKMP-ArXiv2015Greedy,BDFKMP-JGtoappear,BDFMP-DM,BDFMP-JG2013,BDMP-Bulg2012b}, see also the references therein.

\textbf{All the
 values of $\overline{t}_{2}(2,q)$ (up to June
2015),  $q\in Q$, are collected in the work \cite{BDFKMP-ArXiv2015Greedy}.}

Note that, for $q\in Q$, in the works
\cite{BDFKMP-Bulg2013,BDFKMP-ArXiv2013,BDFKMP-ArXiv,BDFKMP-PIT2014,BDFKMP-ArXiv2015Greedy,%
BDFKMP-JGtoappear,BDFMP-DM,BDFMP-JG2013,ComputBound-Svetlog2014},
the most of the values $\overline{t }_{2}(2,q)$ have been
obtained by computer search using \emph{randomized greedy algorithms}
described in \cite
{BDFKMP-JGtoappear,BDFMP-DM,BDFMP-JG2013,BDFMP-JG2015,DFMP-JG2005,DFMP-JG2009,DMP-JG2004}.
In each step, a step-by-step greedy algorithm adds to
    an incomplete current arc  a point providing the maximum
    possible (for the given step) number of new covered
    points.  

We denote by $t_{2}^{G}(2,q)$ \textbf{the size of a complete arc  in
 $\mathrm{PG}(2,q)$ collected in \cite{BDFKMP-ArXiv2015Greedy}.}

An important way to obtain small plane complete arcs is
a step-by-step \emph{algorithm with fixed order of points
(FOP)}, see
\cite{BDFKMP-ArXiv2015FOP,BDFKMP-JGtoappear,ComputBound-Svetlog2014,BDFMP-JG2015,BDMP-Bulg2012b,BFMPD-ENDM2013,BDFMP-ArXivFOP}.
The algorithm FOP fixes a particular order on points of
$\mathrm{PG}(2,q)$. In each step, the algorithm FOP adds to an
incomplete current arc the next point in this order  not lying
on bisecants of this arc. For both prime and non-prime $q$, a
lexicographical order of points can be used, see
Section~\ref{sec-FOP}.

\begin{definition}
\textbf{We call a \emph{lexiarc} an arc obtained by the
algorithm FOP using the lexicographical order of points}.
\end{definition}

 Let $t_{2}^{L}(2,q)$ be \textbf{the size of a complete lexiarc  in
the projective plane $\mathrm{PG}(2,q)$.}

Note that the sizes of complete arcs obtained by the algorithm
FOP vary insignificantly with respect to the order of points,
see \cite{BDFKMP-ArXiv2015FOP,BDMP-Bulg2012b,BFMPD-ENDM2013,BDFMP-JG2015}.
In particular, for the lexicographical order of points described in Section \ref{sec-FOP}, in the case of  \emph{prime $q$,  the size  $t_{2}^{L}(2,q)$ of a complete
lexiarc and its set of points depend on $q$ only}. No other
factors affect size and structure of a complete lexiarc.

Let $L_{1}$, $L_{2}$, $L_{3}$,  and $L$ be the
following sets of values of $q$:
\begin{align}
L_{1}&=\{q\le  301813,~q\mbox{ prime power}\};\label{eq1_L1}\displaybreak[3]\\
L_2&=\{323761,326041,330017,332929,340007,344569,350003,360007,\label{eq1_L2}\\
&\qquad 370003,380041,390001,400009,410009,420001,430007\};\displaybreak[3]\notag\\
L_3&=\{301813< q\le321007, ~q\mbox{ prime power}\};\label{eq1_L3}\displaybreak[3]\\
L&=L_1\cup L_2\cup L_3=\{q\le321007,~ q \mbox{ prime power}\}\,\cup L_2.\label{eq1_L}
\end{align}
For $q\in L_{1}\cup L_2$, the
 values of $t_{2}^{L}(2,q)$  are obtained in
 \cite{BDFMP-ArXivFOP,ComputBound-Svetlog2014,BDFKMP-ArXiv2015FOP,BDFKMP-JGtoappear,BDFMP-JG2015,BFMPD-ENDM2013}
 and collected in the work \cite{BDFKMP-ArXiv2015FOP}. For $q\in L_{3}$, the
 values of $t_{2}^{L}(2,q)$  \emph{are obtained in  this work}.

\textbf{All the
 values of $t_{2}^{L}(2,q)$,  $q\in L$, are collected in this work.}

It should be emphasized that in this work, to obtain upper bounds, we use the
computer search results for \textbf{all prime powers} $q$ in the
regions $q\le 160001$ with greedy algorithms and $q\le321007$
with the algorithm FOP. In this sense, we say that
\textbf{computer search} in the noted regions is \textbf{complete}.

Complete arcs obtained by greedy algorithms have smaller sizes
than complete lexiarcs, however the greedy algorithms take
essentially greater computer time than the algorithm FOP. This
is why the complete computer search for all  prime powers $q$
with the help of greedy algorithms is done for $q\le160001$
\cite{BDFKMP-ArXiv2015Greedy} whereas the complete search by
algorithm FOP is executed for $q\le321007$. So, we have, see Figure \ref{fig_1} in Section \ref{sec_bounds},
\begin{align}\label{eq1_the best}
\overline{t}_2(2,q)=\left\{
\begin{array}{ccl}
t_{2}^{G}(2,q)&\text{if}&q\in Q=\{2\le q \le160001\} \cup  Q_1\smallskip\\
t_{2}^{L}(2,q)&\text{if}&q\in L\setminus Q=(\{160001< q \le321007\} \cup  L_2)\setminus Q_1
\end{array}
\right..
\end{align}

 In the works of the authors
\cite{BDFKMP-Bulg2013,BDFMP-JG2013,BDFMP-JG2015,BFMPD-ENDM2013,BDMP-Bulg2012b},
non-standard types of upper bounds on $t_{2}(2,q)$ are
proposed. In particular, in the paper \cite{BDFKMP-JGtoappear}, the function
$h(q)$ is defined so that
\begin{align}
&t_{2}(2,q)=h(q)\sqrt{3q \ln q}.\label{eq1_h(q)}
\end{align}
Using the
function $h(q)$ instead of
$t_{2}(2,q)$ allows to do estimates and graphics more
expressive, see Figures \ref{fig_1}, \ref{fig_6} in Section \ref{sec_bounds}.

The following theorem summarizes the main results of this work. The theorem is based
on the complete computer search, the results of which are
collected in
\cite{BDFKMP-ArXiv2013,BDFKMP-ArXiv,BDFKMP-ArXiv2015Greedy,BDFKMP-ArXiv2015FOP,BDFKMP-JGtoappear,BDFMP-DM,BDFMP-JG2013,BDFMP-ArXivFOP},
(see also the references therein) and in Tables 1 -- 6 of this paper.
\begin{theorem}\label{th1_main}
Let $t_{2}(2,q)$ be the smallest size of a complete arc in the
projective plane $\mathrm{PG}(2,q)$. Let  $\overline{t}_{2}(2,q)$ be the smallest
known size of a complete arc in $\mathrm{PG}(2,q)$. Let $L_2$ be given by \eqref{eq1_L2}.
 The following upper bounds hold:
    \begin{align}
&t_{2}(2,q)<\overline{t}_{2}(2,q)<
0.998\sqrt{3q\ln q}<1.729\sqrt{q\ln q}~~\mbox{ for }~~7\le q\le160001;\label{eq1_Bounds1G}\displaybreak[0]\\
&t_{2}(2,q)<\overline{t}_{2}(2,q)<1.05 \sqrt{3q\ln q}<1.819\sqrt{q\ln q}~~\mbox{ for }~~7\le q\le321007\mbox{ and }q\in L_2.
 \label{eq1_Bounds1L}
\end{align}

For $q\le160001$, complete arcs in
$\mathrm{PG}(2,q)$ satisfying the upper bounds
\eqref{eq1_Bounds1G} can be constructed with the
    help of the step-by-step greedy algorithm which adds to
    an incomplete current arc, in each step, a point providing the
maximum
    possible (for the given step) number of new covered
    points.

For $160001<q\le321007$ and $q\in L_2$, complete arcs in
$\mathrm{PG}(2,q)$ satisfying the upper bounds
\eqref{eq1_Bounds1L} can be constructed as lexiarcs with the
    help of the algorithm FOP using the lexicographical order of
    points. The only exception is $q=178169$ for which a complete arc satisfying
    \eqref{eq1_Bounds1L} can be obtained by the greedy algorithm.
\end{theorem}

Calculations executed for sporadic $q\le430007$ ($q\in  L_2$ of  \eqref{eq1_L2}) strengthen the
confidence for validity of the bounds of Theorem
\ref{th1_main}, see also Figures \ref{fig_1} --   \ref{fig_3} in Section \ref{sec_bounds}. Also, it is important that the bounds
\eqref{eq1_Bounds1G}, \eqref{eq1_Bounds1L} are close to the
conjectural (but well-founded) bounds of \cite{BDFKMP-PIT2014},
see \eqref{eq1_Prob bounds}, \eqref{eq1_!<} in Conjecture~\ref{conj1} and Figures \ref{fig_4}, \ref{fig_5} in Section  \ref{sec_bounds}. On the whole, our investigations and results (again see figures in Section  \ref{sec_bounds}) allow to conjecture that the bound \eqref{eq1_Bounds1L} holds for all $q$.

\begin{conjecture}\label{conj1_for all q} 
Let $t_{2}(2,q)$ be the smallest size of a complete arc in the
projective plane $\mathrm{PG}(2,q)$. 
The following upper bound holds:
   $$
t_{2}(2,q)<1.05 \sqrt{3q\ln q}<1.819\sqrt{q\ln q}\text{ for all }q\ge7.
$$
\end{conjecture}

The paper is organized as follows. In Section \ref{sec-FOP}, the algorithm with fixed order of points (FOP) is considered. The lexicographical order of points is described and lexiarcs are defined.
In Section \ref{sec_bounds}, upper bounds on $t_{2}(2,q)$, based on
sizes  of complete lexiarcs in
$\mathrm{PG}(2,q)$ and on sizes of complete arcs collected in \cite{BDFKMP-ArXiv2015Greedy}, are proposed.  In Section \ref{sec-random}, the common nature lexiarcs and random arcs is considered. In Section \ref{sec_list tables},
the list of tables with sizes of complete lexiarcs in the projective plane
 $\mathrm{PG}(2,q)$ is given. In Conclusion the results of this paper are briefly analyzed. In Appendix,  tables of sizes of complete lexiarcs in the
projective plane PG(2,q), obtained in this paper and in the previous works of the authors, are collected.

\section{Algorithm with fixed order of points (FOP). Lexiarcs}
\label{sec-FOP}
\subsection{Step-by-step algorithm FOP}

We use results and approaches of the works
\cite{BDFKMP-ArXiv2015FOP,ComputBound-Svetlog2014,BDFMP-JG2015,BDMP-Bulg2012b,BFMPD-ENDM2013,BDFMP-ArXivFOP}.
Consider the projective plane $\mathrm{PG}(2,q)$ and fix a
particular order on its points. The algorithm FOP builds a
complete arc \textbf{iteratively, step-by-step}.

Let $K^{(j)}$ be the arc obtained on the $j$-th step. On the
next step, the first point in the fixed order  not lying on the
bisecants of $K^{(j)}$ is added to $K^{(j)}$.

\textbf{Algorithm FOP.} Suppose that the points of
$\mathrm{PG}(2,q)$ are ordered as
$A_{1},A_{2},\ldots,A_{q^2+q+1}$.  Consider the empty set as
root of the search and let $K^{(j)}$ be the partial solution
obtained in the $j$-th step, as extension of the root. We put
\begin{align}
&K^{(0)}=\emptyset, \, K^{(1)}=\{A_{1}\},\,K^{(2)}=\{A_{1},A_{2}\},\, m(1)=2,
\displaybreak[0]\label{eq3_FOPalgorithm}\\
& K^{(j+1)}=K^{(j)}\cup\{A_{m(j)}\},~ \notag\displaybreak[0]\\
&m(j)=\min \{ i \in [m(j-1)+1,q^2+q+1]\: |\: \nexists \;P,Q \in K^{(j)}  : \, A_{i},P,Q \textrm{ are collinear}\},\notag
\end{align}
i.e. $m(j)$ is the minimum subscript $i$ such that the
corresponding point $A_{i}$ is not saturated by $K^{(j)}$. The
process ends when a complete arc is obtained.
\begin{remark}
\label{rem3_lexicodes}
A point of $\mathrm{PG}(2,q)$ can be treated as a 3-vector
column of a code parity check matrix. Then, formally, the algorithm FOP can be considered as a version of the recursive g-parity check algorithm for greedy codes of codimension $r=3$ and minimum distance $d=4$, see e.g. \cite[p. 25]{BrPlessGreedy}, \cite{MonroePlessGreedCod}, \cite[Section 7]{PlessHandb}.

However,  in
 coding theory, for given
$r,d$, the aim is to get a \emph{long} code while our goal is
to obtain a \emph{short} complete arc. Moreover, our estimates
and computer search show that for $r=3,$ $d=4$, the algorithm
FOP gives \textquotedblleft bad\textquotedblright\ codes,
essentially shorter than \textquotedblleft
good\textquotedblright\ codes corresponding to ovals and
hyperovals.

Finally, note that we do not use a word
\textquotedblleft greedy\textquotedblright in the name of the
algorithm FOP, as in projective geometry the terms
\textquotedblleft greedy algorithm\textquotedblright\ and
\textquotedblleft randomized greedy
algorithm\textquotedblright\ are traditionally connected with
other approaches, see \cite
{BDFKMP-JGtoappear,BDFMP-DM,BDFMP-JG2013,BDFMP-JG2015,DFMP-JG2005,DFMP-JG2009,DMP-JG2004}.
\end{remark}

\subsection{Lexicographical order of points}
In the beginning, we consider $q$  prime. Let the elements of
the field $\mathbb{F}_{q}=\{0,1,\ldots ,q-1\}$ be treated as
integers modulo $q$. Let the points $A_{i}$ of
$\mathrm{PG}(2,q)$ be represented in homogenous coordinates so
that
\begin{align}
\label{eq3_homo}
A_{i}=(x_{0}^{(i)},x_{1}^{(i)},x_{2}^{(i)}),~ x_{j}^{(i)}\in
\mathbb{F}_{q},
\end{align} where the leftmost non-zero element is 1.
For $A_i$, we put
\begin{align}\label{eq3_lexi}
i=x_{0}^{(i)}q^{2}+x_{1}^{(i)}q+x_{2}^{(i)}.
\end{align}
So, the homogenous coordinates of a point $A_{i}$ are treated
as its number $i$ written in the $q$-ary scale of notation.
Recall that the points of $\mathrm{PG}(2,q)$ are ordered as
$A_{1},A_{2},\ldots,A_{q^2+q+1}$.

It is important that \textbf{for such lexicographical order for
\emph{prime} $q$, the size $t^{L}_{2}(2,q)$ of a complete
lexiarc and its set of points depend on $q$ only}. No other
factors affect size and structure of a complete lexiarc.

Now let $q=p^{m}$, $p$ prime, $m\ge2$. Let $F_{q}(x)$ be a
primitive polynomial of $\mathbb{F}_{p^{m}}$ and let $\alpha$
be a root of $F_{q}(x)$. Elements of $\mathbb{F}_{p^{m}}$ are
represented by integers as follows
$$
\mathbb{F}_{p^{m}}=\mathbb{F}_{q}=\{0,1=\alpha^{0},2=\alpha^{1},\ldots,q-1=\alpha^{q-2}\}.
$$
Then we again use \eqref{eq3_homo} and \eqref{eq3_lexi}.
However for non-prime $q$ the size $t^{L}_{2}(2,q)$ of a
complete lexiarc depends on $q$ and on the form of the
polynomial $F_{q}(x)$. In this work we use primitive
polynomials that are created by the program system MAGMA \cite{MAGMA} by
default, see Table~A where polynomials for $q<36000$ are given.
In any case, the choice of the polynomial changes the size of
complete lexiarc inessentially.

We have noted in Introduction that, in general, the sizes of
complete arcs obtained by the algorithm FOP vary
insignificantly with respect to the order of points. In
particular, in
\cite{BDMP-Bulg2012b,BFMPD-ENDM2013,BDFMP-JG2015} the so-called
Singer order of points (based on the Singer group of
collineations) is considered and it is shown that in the region
$5000<q\le40009$, $q$ prime, the difference between the sizes
of complete arcs obtained using the lexicographical order and
the Singer order is less then $2\%$ and none of the two orders
gives the smallest size for all $q$.

\begin{table*}[htbp]
\textbf{Table A.} Primitive polynomials used for lexiarcs in $\mathrm{PG}(2,q)$ with non-prime $q$\medskip\\
\small{
\renewcommand{\arraystretch}{1.0}
\begin{tabular}{@{}c@{\,}|@{\,}c@{\,}||@{\,}c@{\,}|@{}c@{\,}||@{\,}c@{\,}|@{\,}c@{}}\hline
 $q=p^{m}$&primitive&$q=p^{m}$&primitive&$q=p^{m}$& primitive \\
&polynomial&&polynomial&&polynomial\\ \hline
 $4=2^{2}$& $x^{2}+x+1$&$8=2^{3}$& $x^{3}+x+1$&$9=3^{2}$& $x^{2}+2x+2$\\
$16=2^{4}$& $x^{4}+x^{3}+1$&$25=5^{2}$& $x^{2}+x+2$&$27=3^{3}$& $x^{3}+2x^{2}+x+1$ \\
$32=2^{5}$&$x^{5}+x^{3}+1$&$49=7^{2}$&$x^{2}+x+3$&$64=2^{6}$&$x^{6}+x^{4}+x^{3}+1$\\
$81=3^{4}$&$x^{4}+x+2$&$121=11^{2}$&$x^{2}+4x+2$&$125=5^{3}$&$x^{3}+3x+2$\\
$128=2^{7}$&$x^{7}+x+1$&$169=13^{2}$&$x^{2}+x+2$&$243=3^{5}$&$x^{5}+2x+1$\\
$256=2^{8}$&$x^{8}+x^{4}+x^{3}+$&$289=17^{2}$&$x^{2}+x+3$&$343=7^{3}$&$x^{3}+3x+2$\\
&$x^{2}+1$&&&&\\
$361=19^{2}$&$x^{2}+x+2$&$512=2^{9}$&$x^{9}+x^{4}+1$&$529=23^{2}$&$x^{2}+2x+5$\\
$625=5^{4}$&$x^{4}+x^{2}+2x+2$&$729=3^{6}$&$x^{6}+x+2$&$841=29^{2}$&$x^{2}+24x+2$\\
$961=31^{2}$&$x^{2}+29x+3$&$1024=2^{10}$&$x^{10}+x^{6}+x^{5}+$&$1331=11^{3}$&$x^{3}+2x+9$\\
&&&$x^{3}+x^{2}+x+1$&&\\
$1369=37^{2}$&$x^{2}+33x+2$&$1681=41^{2}$&$x^{2}+38x+6$ &$1849=43^{2}$&$x^{2}+x+3$\\
$2048=2^{11}$&$x^{11}+x^{2}+1$&$2187=3^{7}$&$x^{7}+x^{2}+2x+1$&$2197=13^{3}$&$x^{3}+x^{2}+7$\\
$2209=47^{2}$&$x^{2}+x+13$&$2401=7^{4}$&$x^{4}+5x^{2}+4x+3$&$2809=53^{2}$&$x^{2}+49x+2$\\
$3125=5^{5}$&$x^{5}+4x+2$&$3481=59^{2}$&$x^{2}+58x+2$&$3721=61^{2}$&$x^{2}+60x+2$\\
$4096=2^{12}$&$x^{12}+x^{8}+x^{2}+$&$4489=67^{2}$&$x^{2}+63x+2$&$4913=17^{3}$&$x^{3}+x+14$\\
&$x+1$&&&&\\
$5041=71^{2}$&$x^{2}+69x+7$&$5329=73^{2}$&$x^{2}+70x+5$&$6241=79^{2}$&$x^{2}+78x+3$\\
$6561=3^{8}$&$x^{8}+2x^{5}+x^{4}+$&$6859=19^{3}$&$x^{3}+4x+17$&$6889=83^{2}$&$x^{2}+82x+2$\\
&  $2x^{2}+2x+2$ &&&&\\
$7921=89^{2}$&$x^{2}+82x+3$&$8192=2^{13}$&$x^{13}+x^{4}+x^{3}+$&$9409=97^{2}$&$x^{2}+96x+5$\\
&&&$x+1$&\\
$10201=101^{2}$&$x^{2}+97x+2$&$10609=103^{2}$&$x^{2}+102x+5$&$11449=107^{2}$&$x^{2}+103x+2$\\
$11881=109^{2}$&$x^{2}+108x+6$&$12167=23^{3}$&$x^{3}+2x+18$&$12769=113^{2}$&$x^{2}+101x+3$\\
$14641=11^{4}$&$x^{4}+8x^{2}+10x+2$&$15625=5^{6}$&$x^{6}+x^{4}+4x^{3}+$&$16129=127^{2}$&$x^{2}+126x+3$\\
&&&$x^{2}+2$&\\
$16384=2^{14}$&$x^{14}+x^{7}+x^{5}+$&$16807=7^{5}$&$x^{5}+x+4$&$17161=131^{2}$&$x^{2}+83x+127$\\
&$x^{3}+1$&&&\\
$18769=137^{2}$&$x^{2}+95x+20$&$19321=139^{2}$&$x^{2}+72x+111$&$19683=3^{9}$&$x^{9}+2x^{3}+2x^{2}+$\\
&&&&&$x+1$\\
$22201=149^{2}$&$x^{2}+144x+34$&$22801=151^{2}$&$x^{2}+16x+51$&$24389=29^{3}$&$x^{3}+2x+27$\\
$24649=157^{2}$&$x^{2}+153x+82$&$26569=163^2$&$x^2+153x+82$&$27889=167^2$&$x^2+156x+90$\\
$28561=13^4$&$x^4+3x^2+12x+2$&$29791=31^3$&$x^3+x+28$&$29929=173^2$&$x^2+80x+104$\\
$32041=179^2$&$x^2+67x+154$&$32761=181^2$&$x^2+11x+78$&$32768=2^{15}$&$x^{15}+x+1$\\
 \hline
\end{tabular}
}
\end{table*}

\subsection{Starting points of lexiarcs in
$\mathrm{PG}(2,q)$,
 $q$ prime}\label{subsec_start_lexiarc}
\begin{proposition}\label{prop3_init_part_lexi}
Let $q$ be a prime. Then the $j$-th point of a lexiarc in
$\mathrm{PG}(2,q)$ is the same for all $q\ge q_{0}(j)$ where
$q_{0}(j)$ is large enough.
\end{proposition}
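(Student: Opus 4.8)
The plan is to prove this by induction on $j$, showing that for all sufficiently large primes $q$ the first $j$ points of the lexiarc have fixed, $q$-independent normalized coordinates whose entries are bounded by a constant $B(j)$ depending only on $j$. The two ingredients are a transfer principle that converts collinearity over $\mathbb{F}_q$ into collinearity over $\mathbb{Z}$, and an a priori bound guaranteeing that each newly added point has small coordinates. First I would record the initial data. By \eqref{eq3_homo}--\eqref{eq3_lexi} the points of smallest index are $(0,0,1)$, then the points $(0,1,x_2)$, then the points $(1,x_1,x_2)$; hence $K^{(2)}=\{(0,0,1),(0,1,0)\}$ for every $q$ by \eqref{eq3_FOPalgorithm}, and the line $x_0=0$ through these two points is a bisecant of $K^{(j)}$ for all $j\ge 2$. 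Consequently every point with $x_0=0$ is saturated from step $2$ onward, so each point added after $K^{(2)}$ is normalized with $x_0=1$. This reduces the search at each step to the affine chart $\{(1,x_1,x_2)\}$, scanned lexicographically in $(x_1,x_2)$.

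The transfer principle is the elementary observation that three points with fixed integer coordinate representatives are collinear in $\mathrm{PG}(2,q)$ if and only if the corresponding $3\times 3$ integer determinant is $\equiv 0 \pmod q$. Under the induction hypothesis all coordinates in play are bounded by $B(j)$, so these determinants are integers of absolute value at most a constant $M(j)$; thus for every prime $q>M(j)$ collinearity over $\mathbb{F}_q$ coincides with collinearity over $\mathbb{Z}$ (equivalently over $\mathbb{Q}$). Hence, once $q$ passes a threshold, the saturation status of every bounded-coordinate candidate point with respect to $K^{(j)}$ becomes a $q$-independent condition.

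The main obstacle is the a priori bound: I must rule out that, although almost all points are unsaturated, the first unsaturated point in lexicographic order has coordinates growing with $q$. For this I would count in the grid $G_N=\{(1,x_1,x_2):0\le x_1,x_2\le N\}$ with $N=\binom{j}{2}$. The arc $K^{(j)}$ has at most $\binom{j}{2}$ bisecants; each such line meets $G_N$ over $\mathbb{Q}$ in at most $N+1$ points (at most one point per value of $x_1$, or one fixed $x_1$ in the vertical case), so the bisecants cover at most $\binom{j}{2}(N+1)$ of the $(N+1)^2$ grid points. Since $N+1>\binom{j}{2}$, some point of $G_N$ lies on no bisecant over $\mathbb{Q}$ and hence, by the transfer principle for $q$ large, is unsaturated over $\mathbb{F}_q$. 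Therefore the first unsaturated point has coordinates bounded by $B(j+1):=\max(B(j),\binom{j}{2})$, independent of $q$.

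Combining the two ingredients completes the inductive step: the a priori bound confines the next point $P_{j+1}$ to a fixed finite set of bounded-coordinate candidates, and on that finite set the transfer principle makes every saturation test $q$-independent once $q$ exceeds the associated determinant bound. Hence the minimal unsaturated candidate, which is exactly $P_{j+1}$, is the same for all primes $q\ge q_0(j+1)$, where $q_0(j+1)$ is the maximum of $q_0(j)$ and the finitely many determinant thresholds. This establishes the induction and the proposition. I expect the counting step of the third paragraph to be the delicate point, since it is what forbids the search from drifting to large coordinates; the determinant transfer and the reduction to the $x_0=1$ chart are then routine.
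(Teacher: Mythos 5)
Your core mechanism is the same as the paper's: the paper distinguishes points that are ``absolutely'' covered ($\det(P_1,P_2,A_i)=0$ over $\mathbb{Z}$) from those that are ``simply'' covered ($\det\equiv 0 \bmod q$ with $\det\neq 0$), and observes that for $q$ large only absolute covering survives, so the choice of the next point stabilizes. That is exactly your transfer principle. Where you go beyond the paper is the a priori bound: the paper's phrase ``for $q$ large enough, $q$ does not divide any of the possible $m$'' tacitly assumes that the candidate points scanned before the next arc point is reached have coordinates bounded independently of $q$, and your counting argument in the grid $G_N$, $N=\binom{j}{2}$, is precisely the justification the paper omits. That addition, together with the reduction to the chart $x_0=1$ via the bisecant $x_0=0$, makes your write-up genuinely more complete than the published proof and yields an explicit route to computing $q_0(j)$.

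There is, however, one gap in the counting step as written. From the existence of an unsaturated point $(1,a,b)\in G_N$ you conclude that the lexicographically first unsaturated point $(1,x_1^{*},x_2^{*})$ has both coordinates at most $N$. But the order induced by \eqref{eq3_lexi} on the affine chart is lexicographic in $(x_1,x_2)$, so the grid count only gives $x_1^{*}\le a\le N$; if $x_1^{*}<a$, nothing you have said bounds $x_2^{*}$, which a priori could be as large as $q-1$. The bound on $x_2^{*}$ needs a separate (and easier) observation: the column $x_1=x_1^{*}$ contains an unsaturated point, hence is not itself a bisecant, hence meets each of the $\binom{j}{2}$ bisecants in at most one point of $\mathrm{PG}(2,q)$, so its first unsaturated entry occurs at height $x_2^{*}\le\binom{j}{2}$. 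With that one line added, the inductive step closes and the proposition follows as you describe.
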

\begin{proof}
Suppose that, in \eqref{eq3_FOPalgorithm}, at a certain step
$j$ we have $K^{(j)}\setminus K^{(j-1)}=\{P\}$, with  $P=A_s$.
A point $Q=A_r \not \in K^{(j)}$ will be the next chosen point
in the extension process if and only if all the points $A_i$
with $i \in [s+1,r-1]$ are covered by $K^{(j)}$. That is, for
any $i \in [s+1,r-1]$ at least one of the determinants of the
coordinates of the points $P_1,P_2,A_i$, with $P_1,P_2 \in
K^{(j)}$, is equal to zero modulo $q$. This can happen only for
two reasons:
\begin{enumerate}
\item   $\det(P_1,P_2,A_i)= 0$: we  say that  $A_i$ is
    ``absolutely" covered by $K^{(j)}$;
\item  $\det(P_1,P_2,A_i)= m\neq0$, with $m \equiv 0 \bmod
    q$.
\end{enumerate}
It is clear that, for $q$ large enough, $q$ does not divide any
of the possible $m=\det(P_1,P_2,A_i)$ and then, at least for
$j$ ``small",  the points covered are just the absolutely
covered points. Therefore, when $q$ is large enough the
lexiarcs  share a certain number of points.
\end{proof}

The values of $q_{0}(j)$ can be found with the help of
calculations based on the proof of Proposition
\ref{prop3_init_part_lexi}. Also, we can directly consider
lexiarcs  constructed by the algorithm FOP for a convenient
region of $q$.

\begin{example}\label{ex3}
Values of $q_{0}(j)$, $j\le 24$, together with the homogenous
coordinates $(a_{0},a_{1},a_{2})$ of the common points, are
given in Table B. So, for all prime $q\ge251$ the first 24
points of a lexiarc are as in Table B. Since
$t^{L}_{2}(2,251)=63$ \cite{BDFMP-ArXivFOP}, we know
$24/63\thickapprox38\%$ of complete lexiarc points for $q=251$.
For growing $q$ this percentage decreases (relatively slowly).
For instance, for $q\thickapprox270000$ it is
$\thickapprox20\%$.

\begin{table*}[htbp]
\begin{center}
\textbf{Table B.} The first 24 points of complete lexiarcs in
$\mathrm{PG}(2,q)$, $q$ prime

\renewcommand{\arraystretch}{0.9}
\begin{tabular}{@{}r|rrr|r|r|rrr|r|r|rrr|r@{}}\hline
 $j$&$a_{0}$ & $a_{1}$ &$a_{2}$&$q_{0}(j)$&$j$&  $a_{0}$ & $a_{1}$ &$a_{2}$  &
 $q_{0}(j)$&$j$ &$a_{0}$ & $a_{1}$ &$a_{2}$  &$q_{0}(j)$   \\ \hline
 1& 0& 0 & 1 & 2 & 2 & 0 & 1 & 0 & 2 & 3 & 1 & 0 & 0 & 2 \\
 4& 1& 1 & 1 & 2 & 5 & 1 & 2 & 3 & 5 & 6 & 1 & 3 & 2 & 7 \\
 7& 1& 4 & 5 &11 & 8 & 1 & 5 & 4 &11 & 9 & 1 & 6 & 8 &17 \\
10& 1& 7 &11 &23 &11 & 1 & 8 & 6 &31 &12 & 1 & 9 &13 &37 \\
13& 1&10 &12 &37 &14 & 1 &11 & 7 &41 &15 & 1 &12 &22 &137 \\
16& 1&13 &16 &79 &17 & 1 &14 &17 &101&18 & 1 &15 &21 &71 \\
19& 1&16 & 9 &229&20 & 1 &17 &14 &151&21 & 1 &18 &10 &199 \\
22& 1&19 & 27&239&23 & 1 &20 &18 &197&24 & 1 &21 &15 &251\\ \hline
\end{tabular}
\end{center}
\end{table*}
\end{example}

\section{Upper bounds on $t_{2}(2,q)$, $q\in L$}\label{sec_bounds}

For bounds, we use data on sizes $t^{L}_{2}(2,q)$, $q\in L$, of complete
lexiarcs in $\mathrm{PG}(2,q)$ obtained and collected in
\cite{BDFKMP-ArXiv2015FOP,ComputBound-Svetlog2014,BDFMP-ArXivFOP,BDFMP-JG2015,BDMP-Bulg2012b,BFMPD-ENDM2013}
and in Tables 1 -- 6 of this work, see Appendix. Also, we use data on sizes $t_{2}^G(2,q)$, $q\in Q$, collected in \cite{BDFKMP-ArXiv2015Greedy}. Here $L$ and $Q$ are given by \eqref{eq1_L} 
and~\eqref{eq1_Q}, respectively. 

In Table 1, for $4\leq q\leq 316969$, $q$ non-prime, the sizes
 $t^{L}_{2}(2,q)$ (for short $t^{L}_{2}$)
 of complete lexiarcs in $\mathrm{PG}(2,q)$ are collected.

In Table 2,  for 15 sporadic $q$'s in the interval
$[323761\ldots430007]$, the sizes
 $t^{L}_{2}=t^{L}_{2}(2,q)$
 of complete lexiarcs in $\mathrm{PG}(2,q)$ are written.
 All $q\in L_{2}$, see~\eqref{eq1_L2}, are included in this table.

 In Tables 3 -- 6,  for \textbf{all prime powers} $q\le321007$ 
  the sizes $t^{L}_{2}=t^{L}_{2}(2,q)$
 of complete lexiarcs in $\mathrm{PG}(2,q)$ are collected.

We define the functions $h^{L}(q)$ and $h^G(q)$ as follows.
\begin{align}\label{eq3_m2L}
t_{2}^{L}(2,q)=h^{L}(q)\sqrt{3q\ln q};\\
t_{2}^G(2,q)=h^G(q)\sqrt{3q\ln q}.\notag
\end{align}

  Figure \ref{fig_1}
shows the following: the conjectural upper bound
\eqref{eq1_Prob bounds} of \cite{BDFKMP-PIT2014} divided by
$\sqrt{3q\ln q}$ (the top dashed red curve); values
$h^{L}(q)=t^{L}_{2}(2,q)/\sqrt{3q\ln q}$ for complete lexiarcs\smallskip 
~in $\mathrm{PG}(2,q)$, $q\in L$ of \eqref{eq1_L} (the 2-nd solid black curve);
values $h^G(q)=t^G_{2}(2,q)/\sqrt{3q\ln q}$
for complete arcs in $\mathrm{PG}(2,q)$ collected in \cite{BDFKMP-ArXiv2015Greedy},
$q\in Q$ of \eqref{eq1_Q} (the bottom solid blue curve); upper bounds  ``1.05" and ``0.998" (the horizontal dashed red
lines).  Vertical dashed-dotted magenta lines mark regions
$q\le160001$  and $q\le321007$ of the complete computer search for all  prime powers $q$.
\begin{figure}[htbp]
\includegraphics[width=\textwidth]{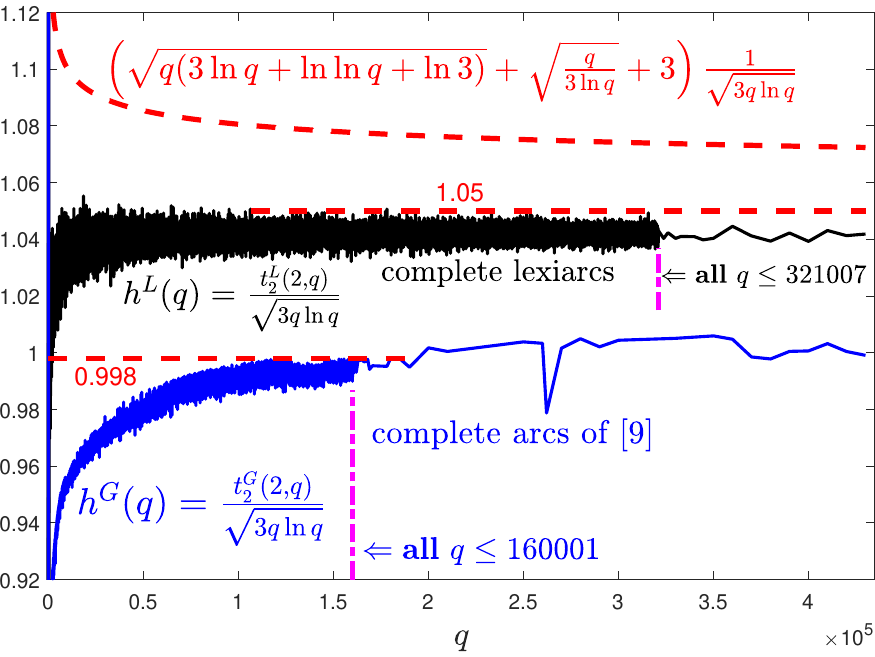}
\caption{\textbf{Conjectural upper bound \eqref{eq1_Prob bounds}
of \cite{BDFKMP-PIT2014}  vs sizes
$t^{L}_{2}(2,q)$ of complete lexiarcs and sizes $t^G_{2}(2,q)$ of  complete arcs 
collected in \cite{BDFKMP-ArXiv2015Greedy} (all values are divided by $\sqrt{3q\ln q}$):} conjectural upper bound \eqref{eq1_Prob bounds}
of \cite{BDFKMP-PIT2014} divided by $\sqrt{3q\ln q}$) (\emph{top dashed red
curve}); values $h^{L}(q)=t^{L}_{2}(2,q)/\sqrt{3q\ln q}$ for complete lexiarcs in\smallskip
~$\mathrm{PG}(2,q)$, $q\in L$ (\emph{the 2-nd solid black
curve}); values $h^G(q)=t^G_{2}(2,q)/\sqrt{3q\ln q}$
for complete arcs  collected in \cite{BDFKMP-ArXiv2015Greedy}, $q\in Q$ (\emph{bottom solid blue
curve}); upper bounds ``1.05", ``0.998" (\emph{horizontal dashed red lines}).  \emph{Vertical
dashed-dotted magenta lines} mark regions $q\le160001$ and $q\le321007$ of the complete computer search
for all prime powers $q$}
\label{fig_1}
\end{figure}

Figure \ref{fig_2} shows the upper bound $1.05\sqrt{3q\ln q}$
 (the top dashed-dotted red curve), see \eqref{eq1_Bounds1L},
 sizes $t^{L}_{2}(2,q)$ of complete lexiarcs in
$\mathrm{PG}(2,q)$, $q\in L$ of \eqref{eq1_L} (the 2-nd solid black curve), and sizes $t^G_{2}(2,q)$
for complete arcs collected in \cite{BDFKMP-ArXiv2015Greedy}, $q\in Q$  of \eqref{eq1_Q} (the bottom solid blue
curve).  Vertical
dashed-dotted magenta lines mark regions $q\le160001$ and $q\le321007$.

The curves of $1.05\sqrt{3q\ln q}$ and $t^{L}_{2}(2,q)$ almost
coalesce with each other in the scale of Figure \ref{fig_2}. It should be noted that $1.05\sqrt{3q\ln q}>t_{2}^{L}(2,q)$ if $q\ge106627$ and $q\ne178169$. In the other side, we have 
$1.05\sqrt{3q\ln q}>t_{2}^{G}(2,q)$ for all $q\in Q$, including $q=178169$, see \eqref{eq1_Q1},~\eqref{eq1_Q}. So, in fact, $1.05\sqrt{3q\ln q}$ is the upper bound on $t_2(2,q)$ for all $q\in L$.
\begin{figure}[htbp]
\includegraphics[width=\textwidth]{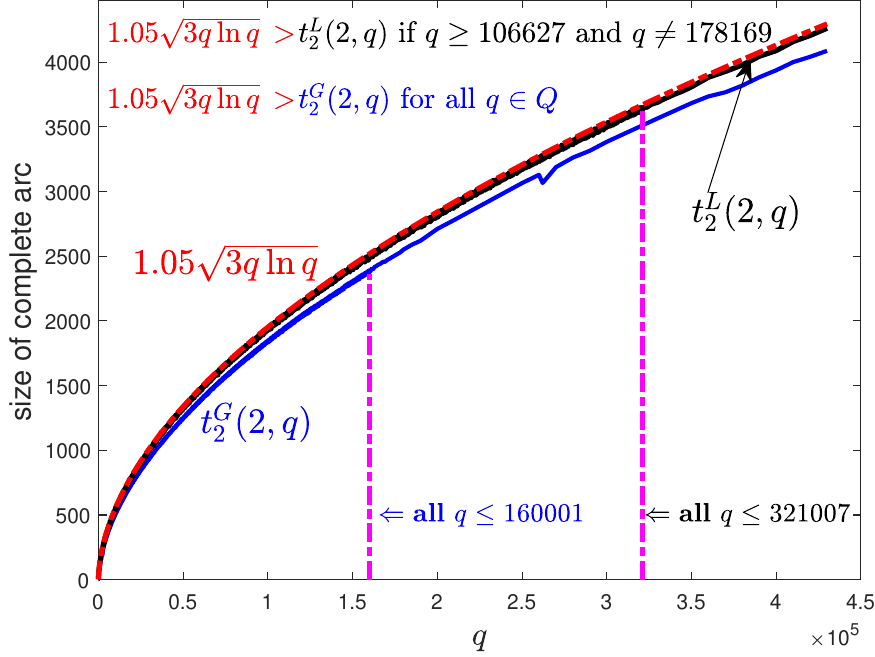}
\caption{\textbf{Upper bound $1.05\sqrt{3q\ln q}$
vs sizes
$t^{L}_{2}(2,q)$ of complete lexiarcs and sizes $t^G_{2}(2,q)$ of  complete arcs 
collected in \cite{BDFKMP-ArXiv2015Greedy}:}
upper bound  $1.05\sqrt{3q\ln q}$
(\emph{top dashed-dotted red curve}); sizes $t^{L}_{2}(2,q)$
of complete lexiarcs in $\mathrm{PG}(2,q)$, $q\in L$
(\emph{the 2-nd solid black curve}); sizes $t^G_{2}(2,q)$
for complete arcs collected in \cite{BDFKMP-ArXiv2015Greedy}, $q\in Q$ (\emph{bottom solid blue
curve}).  \emph{Vertical
dashed-dotted magenta lines} mark regions $q\le160001$ and $q\le321007$}
\label{fig_2}
\end{figure}

Figure \ref{fig_3} presents the difference $1.05\sqrt{3q\ln
q}-t^{G}_{2}(2,q)$ between the upper bound\linebreak
$1.05\sqrt{3q\ln q}$ and the sizes $t^{G}_{2}(2,q)$ of complete
arcs in $\mathrm{PG}(2,q)$ collected in \cite{BDFKMP-ArXiv2015Greedy}, $q\in Q$  of~\eqref{eq1_Q}. The  vertical
dashed-dotted magenta line marks the region $q\le160001$ of the complete computer search. The ``tooth" on the graphics corresponds to $\overline{t}_{2}(2,2^{18})=3066$  \cite{DGMP-Innov}.
\begin{figure}[htbp]
\includegraphics[width=\textwidth]{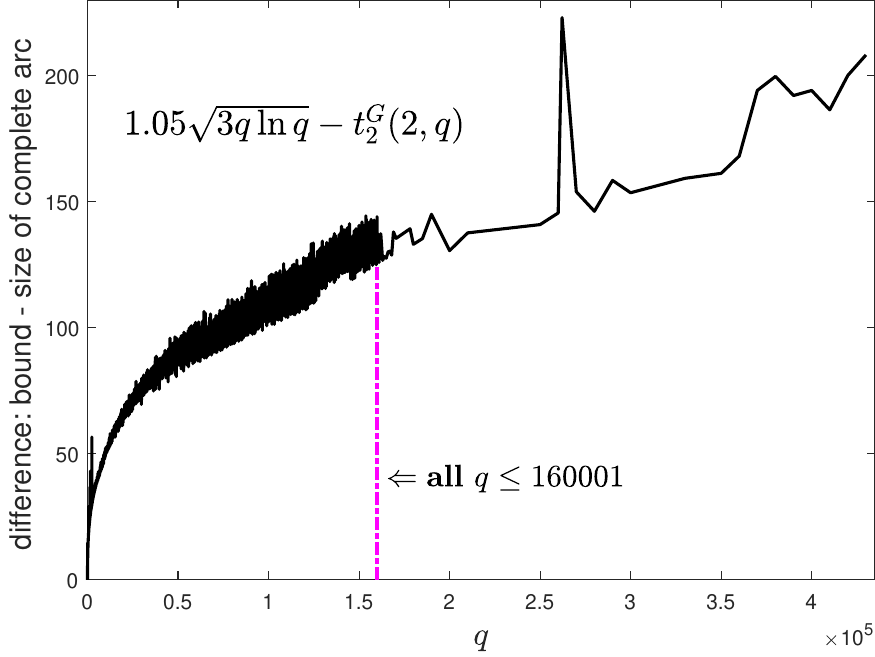}
\caption{\textbf{Difference $1.05\sqrt{3q\ln
q}-t^{G}_{2}(2,q)$ between upper bound
$1.05\sqrt{3q\ln q}$ and sizes $t^{G}_{2}(2,q)$ of complete
arcs in $\mathrm{PG}(2,q)$, collected in \cite{BDFKMP-ArXiv2015Greedy}, $q\in Q$}.  \emph{Vertical
dashed-dotted magenta line} marks region $q\le160001$ of the complete computer search}
\label{fig_3}
\end{figure}
\begin{observation}\label{obs2_1.05}
The difference $1.05\sqrt{3q\ln
q}-t^{G}_{2}(2,q)$ between the upper bound\linebreak
$1.05\sqrt{3q\ln q}$ and sizes $t^{G}_{2}(2,q)$ of complete
arcs in $\mathrm{PG}(2,q)$, collected in \emph{\cite{BDFKMP-ArXiv2015Greedy}}, tends to increase when $q$ grows.
\end{observation}

In Figure \ref{fig_4}, the conjectural upper bound from
\cite{BDFKMP-PIT2014} $\sqrt{q(3\ln q+\ln\ln
q+\ln 3)}+\sqrt{\frac{q}{3\ln q}}+3$, see~\eqref{eq1_Prob bounds}
(the top dashed-dotted red curve) and the sizes
$t^{L}_{2}(2,q)$ of complete lexiarcs in $\mathrm{PG}(2,q)$,
$q\in L$ of \eqref{eq1_L} (the bottom solid black curve) are shown. The vertical
dashed-dotted magenta line marks region $q\le321007$.
\begin{figure}[htbp]
\includegraphics[width=\textwidth]{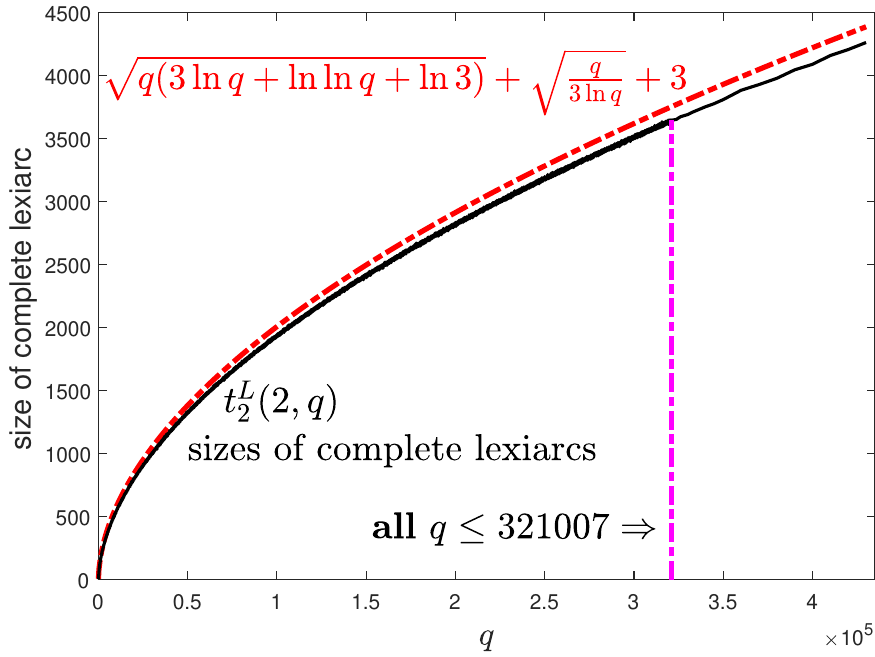}
\caption{\textbf{Conjectural upper bound \eqref{eq1_Prob bounds}
of \cite{BDFKMP-PIT2014} vs sizes $t^{L}_{2}(2,q)$ of complete lexiarcs in $\mathrm{PG}(2,q)$:}
conjectural upper bound  of \cite{BDFKMP-PIT2014}
(\emph{top dashed-dotted red curve}); sizes $t^{L}_{2}(2,q)$ of complete lexiarcs in $\mathrm{PG}(2,q)$, $q\in L$
(\emph{bottom solid black curve}).  \emph{Vertical
dashed-dotted magenta line} marks region $q\le321007$ of the complete computer search}
\label{fig_4}
\end{figure}

Figure \ref{fig_5} presents the difference
$$
\left(\sqrt{q(3\ln q+\ln\ln q+\ln3)}+\sqrt{\frac{q}{3\ln q}}+3\right)-t^{L}_{2}(2,q)
$$
 between the conjectural upper bound
\eqref{eq1_Prob bounds} from \cite{BDFKMP-PIT2014}
 and the sizes
$t^{L}_{2}(2,q)$ of complete lexiarcs in $\mathrm{PG}(2,q)$.
\begin{figure}[htbp]
\includegraphics[width=\textwidth]{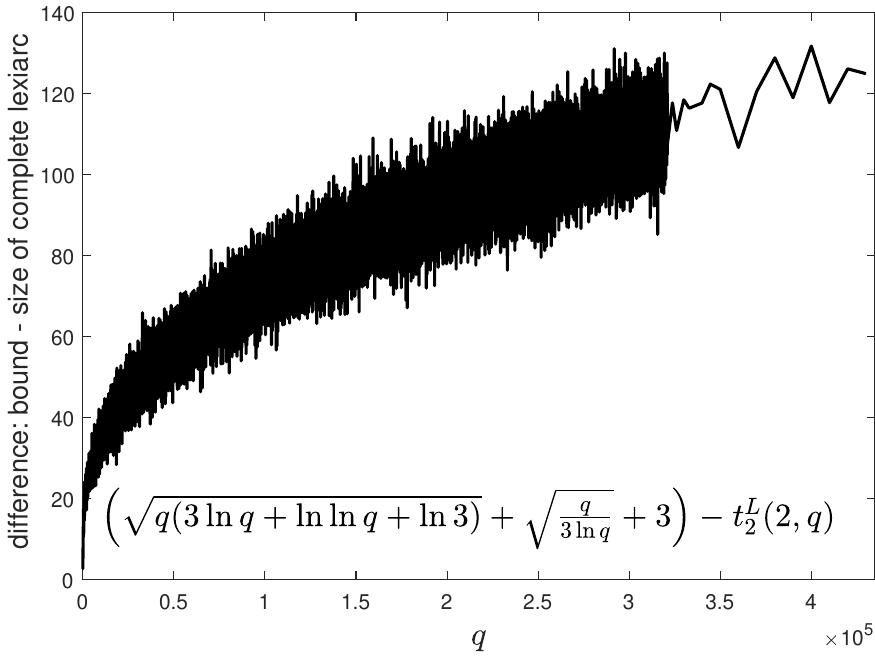}
\caption{\textbf{Difference $\left(\sqrt{q(3\ln q+\ln3\ln q+\ln3)}+\sqrt{\frac{q}{3\ln q}}+3\right)-t^{L}_{2}(2,q)
$ between the conjectural upper bound
\eqref{eq1_Prob bounds} from \cite{BDFKMP-PIT2014}
 and the sizes
$t^{L}_{2}(2,q)$ of complete lexiarcs in $\mathrm{PG}(2,q)$, $q\in L$}}
\label{fig_5}
\end{figure}

\begin{observation}\label{obs2_diff conj}
The difference  $\left(\sqrt{q(3\ln q+\ln\ln q+\ln3)}+\sqrt{\frac{q}{3\ln q}}+3\right)-t^{L}_{2}(2,q)
$ between the conjectural upper bound
\eqref{eq1_Prob bounds} from \emph{\cite{BDFKMP-PIT2014}}
 and the sizes
$t^{L}_{2}(2,q)$ of complete lexiarcs in $\mathrm{PG}(2,q)$ tends to increase when $q$ grows.
\end{observation}

Data for $q\in L$ of \eqref{eq1_L}, collected in Tables 1 -- 6, give rise to
Theorems \ref{th3_m_lexi_upper} and \ref{th3_m_lexi_lower} on upper and lower bounds on $h^{L}(q)$.

\begin{theorem}\label{th3_m_lexi_upper}
 Let the functions\/ $h(q)$ and $h^{L}(q)$ be as
in \eqref{eq1_h(q)} and \eqref{eq3_m2L}. Let $L_{2}$ be the set
of values of $q$ given by \eqref{eq1_L2} and Table~\emph{2}. In
$\mathrm{PG}(2,q)$ the following upper bounds are provided by
complete lexiarcs:
\begin{align*}
h(q)\le h^{L}(q)<
\left\{\begin{array}{lcl}
1.056&\text{for}&\phantom{1781}16\le q\le 18443\\
1.053&\text{for}&\phantom{1}18443< q\le 80407\\
1.051&\text{for}&\phantom{1}80407< q\le 178169\\
1.050&\text{for}&178169< q\le 315701\\
1.048&\text{for}&315701< q\le 321007\mbox{ and }q\in L_{2}
\end{array}
\right.;
\end{align*}
\begin{align}\label{eq3_1.05lex}
h(q)\le h^{L}(q)<1.05\text{ for }106627\le q\le321007\mbox{ and }q\in L_{2},~~q\neq178169.
\end{align}
\end{theorem}

\begin{theorem}\label{th3_m_lexi_lower}
 Let
the function $h^{L}(q)$ be as in \eqref{eq3_m2L}. Let $L_{2}$
be the set of values of $q$ given by \eqref{eq1_L2} and
Table~\emph{2}. In $\mathrm{PG}(2,q)$ the following lower
bounds on $h^{L}(q)$ hold.
\begin{align*}
 h^{L}(q)>
\left\{\begin{array}{lcl}
1.023&\text{for}&\phantom{1}11971\le q\le 34583\\
1.028&\text{for}&\phantom{1} 34583< q\le 70451\\
1.032&\text{for}&\phantom{1}70451< q\le 159349\\
1.033&\text{for}&159349<q\le 192133\\
1.034&\text{for}&192133< q\le 297967\\
1.036&\text{for}&297967< q\le 321007\mbox{ and }q\in L_{2}\
\end{array}
\right..
\end{align*}
\end{theorem}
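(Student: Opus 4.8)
The plan is to treat this as a direct verification against the tabulated data, exactly parallel to the upper-bound result of Theorem~\ref{th3_m_lexi_upper}. By \eqref{eq3_m2L}, for every prime power $q$ we have the explicit value
\[
h^{L}(q)=\frac{t_{2}^{L}(2,q)}{\sqrt{3q\ln q}},
\]
in which $t_{2}^{L}(2,q)$ is read off from Tables~1--5 (and from Table~2 for $q\in L_{4}$). First I would fix the partition points $11971,\,34583,\,70451,\,159349,\,192133,\,291829,\,301813$ and, on each of the resulting sub-intervals, scan over the finitely many prime powers $q$ present in the tables, computing $h^{L}(q)$ and checking that it exceeds the constant claimed for that sub-interval.

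The reason the lower bound can be taken as an \emph{increasing} step function is the empirical behaviour of $h^{L}(q)$ recorded in Figure~\ref{fig_11} and in Observations~\ref{obs2_1.05} and \ref{obs2_cLup}: although $h^{L}(q)$ oscillates, its oscillation amplitude shrinks as $q$ grows and the curve drifts upward. Consequently the infimum of $h^{L}$ over a tail $[q_{i},\,301813]$ increases with $q_{i}$. I would choose each breakpoint $q_{i}$ to be the largest prime power at which $h^{L}$ dips down essentially to the current threshold, i.e.\ the record-low arguments of the lower envelope of the curve; beyond $q_{i}$ no tabulated value falls below the next, higher constant, which is what lets the bound be raised from $1.023$ up to $1.035$ in successive steps.

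The one genuinely non-routine point---and the main obstacle---is that $h^{L}(q)$ is \emph{not} monotone, so checking the endpoints of each sub-interval does not suffice: every prime power in the range (and every $q\in L_{4}$) must be examined. Thus the substance of the argument is the finite but exhaustive tabulation rather than any closed-form estimate, and the thresholds are necessarily stated tightly (to three decimals) so that they sit just below the true minima in each region. I would conclude by observing that, together with Theorem~\ref{th3_m_lexi_upper}, these inequalities bracket $h^{L}(q)$ into a narrow band (roughly between $1.03$ and $1.05$ for large $q$), which is precisely the shrinking-amplitude oscillation reported in Observations~\ref{obs2_1.05} and \ref{obs2_cLup} and which shows that the main term $\sqrt{3q\ln q}$ captures the growth of $t_{2}^{L}(2,q)$ from both sides.
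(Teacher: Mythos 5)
Your proposal is correct and coincides with the paper's own (implicit) argument: the theorem is established purely by exhaustive computation of $h^{L}(q)=t_{2}^{L}(2,q)/\sqrt{3q\ln q}$ for every prime power $q$ in Tables 1--5 and every $q\in L_{4}$ in Table 2, with the breakpoints $11971, 34583, 70451, 159349, 192133, 291829$ being exactly the arguments where the lower envelope of the non-monotone curve last dips below the next threshold. You correctly identify that non-monotonicity forces a check of every tabulated value rather than just interval endpoints, which is the whole substance of the paper's verification.
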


Figure \ref{fig_6} illustrates Theorems
\ref{th3_m_lexi_upper}, \ref{th3_m_lexi_lower} and presents the
value $h^{L}(q)=t_{2}^{L}(2,q)/\sqrt{3q\ln q}$, $q\in L$ (the
solid black curve), and its upper and lower bounds (the
dashed red lines) in more detail than Figure
\ref{fig_1}. Also, the middle line
$y=h^{L}_{\mbox{mid}}=1.042$ is shown. The percentage for a
bound $B$ is calculated as
\begin{align}\label{eq3_percentage}
\frac{B-h^{L}_{\mbox{mid}}}{h^{L}_{\mbox{mid}}}100\%,\quad h^{L}_{\mbox{mid}}=1.042,
\end{align}
where
$B\in\{1.056,1.053,1.051,1.050,1.048,1.023,1.028,1.032,1.033,1.034,1.036\}$.
\begin{figure}[htbp]
\includegraphics[width=\textwidth]{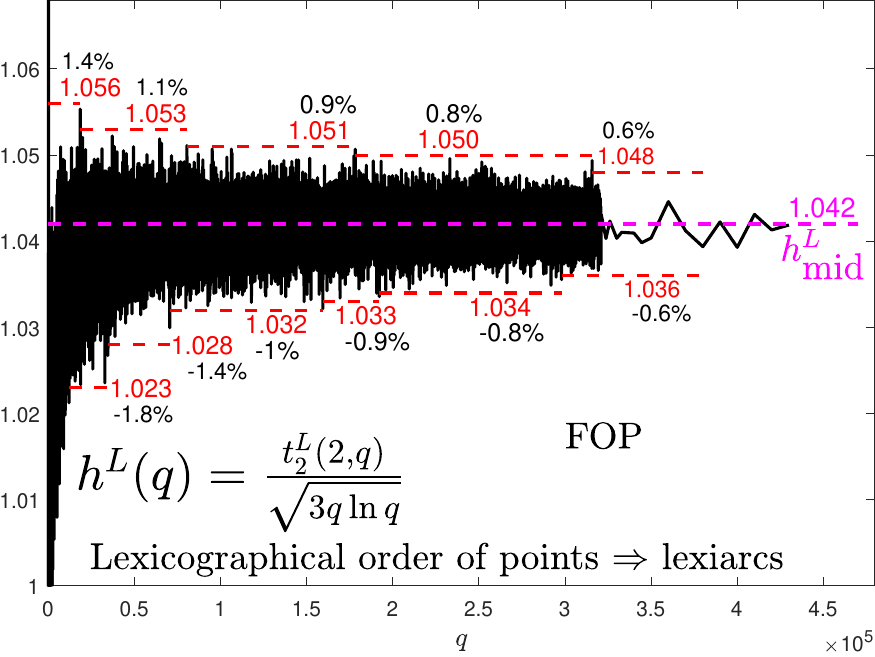}
\caption{\textbf{Values of
$h^{L}(q)=t^{L}_{2}(2,q)/\sqrt{3q\ln q}$ for complete lexiarcs in
$\mathrm{PG}(2,q)$, $q\in L$ (\emph{solid black curve}) and
the corresponding upper and lower bounds (\emph{dashed red lines})}.
\emph{Magenta line} $y=1.042$ is a ``middle'' line for $h^{L}(q)$}
\label{fig_6}
\end{figure}

By Theorems \ref{th3_m_lexi_upper}, \ref{th3_m_lexi_lower} and
Figure \ref{fig_6}, we have the following observation.

\begin{observation}
\label{obs2_m_lexi} For $q\in L$, $q\ge11971$,  the values of
$h^{L}(q)=t_{2}^{L}(2,q)/\sqrt{3q\ln q}$ oscillate around the
horizontal line $y=1.042$ with a small amplitude. For growing
$q$, the oscillation amplitude decreases. Upper bounds on the
amplitude decrease from $1.4\%$ to $0.6\%$ while lower bounds
change from $-1.8\%$ to $-0.6\%$, where the percentage
corresponds to \eqref{eq3_percentage}. See also  Remark
\emph{\ref{rem4_enigma}}.
\end{observation}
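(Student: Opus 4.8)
The plan is to derive Observation \ref{obs2_m_lexi} as a direct arithmetic corollary of the two-sided envelope already established in Theorems \ref{th3_m_lexi_upper} and \ref{th3_m_lexi_lower}. Those theorems assert that, for the overlapping range $q\ge 11971$, $q\in L$, the function $h^{L}(q)=t_{2}^{L}(2,q)/\sqrt{3q\ln q}$ lies between a nonincreasing step of upper bounds $B_{\mathrm{up}}\in\{1.056,1.053,1.051,1.050,1.048\}$ and a nondecreasing step of lower bounds $B_{\mathrm{low}}\in\{1.023,1.028,1.032,1.033,1.034,1.035\}$ on successive $q$-intervals (together with the sporadic $q\in L_{4}$). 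First I would record these two envelopes side by side, together with the reference value $h^{L}_{\mathrm{mid}}=1.042$ fixed in \eqref{eq3_percentage}.

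The key computation is to feed each envelope value $B$ into the percentage formula \eqref{eq3_percentage}. For the upper envelope this yields $\tfrac{B-1.042}{1.042}\cdot100\%\approx 1.34\%,\,1.06\%,\,0.86\%,\,0.77\%,\,0.58\%$, which one rounds to the endpoints $1.4\%$ and $0.6\%$ claimed in the statement; for the lower envelope it yields $\approx -1.82\%,\,-1.34\%,\,-0.96\%,\,-0.86\%,\,-0.77\%,\,-0.67\%$, rounding to $-1.8\%$ and $-0.7\%$. This reproduces the entire quantitative content of the amplitude assertion.

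The monotonicity (the assertion that the oscillation amplitude decreases) then follows at once: the upper-bound sequence is nonincreasing and the lower-bound sequence is nondecreasing as $q$ runs through the successive intervals, so both step envelopes converge monotonically toward the line $y=h^{L}_{\mathrm{mid}}=1.042$. Since by the two theorems $h^{L}(q)$ is squeezed between these envelopes, the band containing the data narrows with growing $q$; the phrase ``oscillate around $y=1.042$'' is simply the reading of the fact that $h^{L}_{\mathrm{mid}}$ lies strictly inside every two-sided gap $[B_{\mathrm{low}},B_{\mathrm{up}}]$, as one checks from the tabulated values above.

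The hard part does not reside in this observation but in the two theorems it invokes, namely in establishing that the step bounds on $h^{L}(q)$ actually hold for \emph{every} prime power in each subinterval of $[11971,301813]$ together with the sporadic $q\in L_{4}$. That verification rests on the exhaustive tabulation of $t_{2}^{L}(2,q)$ in Tables 1--5, which I take as given here. Granting it, the observation reduces to the elementary evaluation of \eqref{eq3_percentage} and the monotone-squeeze argument above, so no further estimates are needed.
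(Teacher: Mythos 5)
Your proposal is correct and follows essentially the same route as the paper, which introduces the observation with the words ``By Theorems \ref{th3_m_lexi_upper}, \ref{th3_m_lexi_lower} and Figure \ref{fig_12}, we have the following observation'': the quantitative content is exactly the evaluation of \eqref{eq3_percentage} at the step bounds $\{1.056,\dots,1.048\}$ and $\{1.023,\dots,1.035\}$, giving $1.34\%,\dots,0.58\%$ and $-1.82\%,\dots,-0.67\%$ as you compute. The only point worth noting is that, as in the paper, the claim that the values actually \emph{oscillate} about $y=1.042$ (rather than merely lie in a shrinking band containing it) is an empirical reading of the tabulated data and Figure \ref{fig_12}, not a consequence of the two theorems alone.
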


\begin{remark}\label{rem4_enigma}\begin{description}
                                   \item[(i)] The oscillation (with decreasing amplitude) of
 $h^{L}(q)$
around the  horizontal line  (see Figure
\ref{fig_6} and Observation
\ref{obs2_m_lexi}) is an
interesting \textbf{enigma} that should be investigated and
explained.
\item[(ii)] It would be  interesting to understand
       the working mechanism and to do quantitative estimates
       for
the  step-by-step algorithm FOP, see \eqref{eq3_FOPalgorithm},
similarly to the work \cite{BDFKMP-PIT2014} where the working
mechanism of a greedy algorithm is treated.
                                 \end{description}
\end{remark}

\begin{proposition}\label{cor3} 
Let $t_{2}(2,q)$ be the smallest size of a complete arc in the
projective plane $\mathrm{PG}(2,q)$. 
The following upper
bound holds:
\begin{align*}t_{2}(2,q)<1.05 \sqrt{3q\ln q}~~\mbox{ for }~~7\le q\le321007,~q\in L_{2}.
\end{align*}
\end{proposition}
\begin{proof}
By \cite[Tab.\,3]{BDFKMP-ArXiv2015Greedy}, we have
$t_{2}^G(2,178169)=2530$, whence
$h(178169)<h^G(178169)<0.996$. Now the assertion
follows from \eqref{eq3_1.05lex} and \eqref{eq1_Bounds1G}.
\end{proof}

\section{On the common nature of lexiarcs and random arcs}
\label{sec-random} In this section we compare complete random
arcs in $\mathrm{PG}(2,q)$ and complete lexiarcs. The random
arcs are constructed iteratively. The next point of an
incomplete current arc is taken randomly among points that are
not covered by the arc bisecants. Let $t_{2}^{R}(2,q)$ be the
size of a complete random arc in $\mathrm{PG}(2,q)$. The values
of $t_{2}^{R}(2,q)$ for $q\le46337$, $q$ prime, are collected
in the work \cite{BDFMP-ArXivRandom}. We define a function
$h^{R}(q)$ by
\begin{align*}
t_{2}^{R}(2,q)=h^{R}(q)\sqrt{3q\ln q}.
\end{align*}

Values of $h^{R}(q)=t^{R}_{2}(2,q)/\sqrt{3q\ln q}$ (the solid
green curve) and upper bound $y=1.054$ (the dashed-dotted red
line) for $q\le46337$, $q$ prime, are shown in Figure
\ref{fig_7}.
\begin{figure}[htbp]
\includegraphics[width=\textwidth]{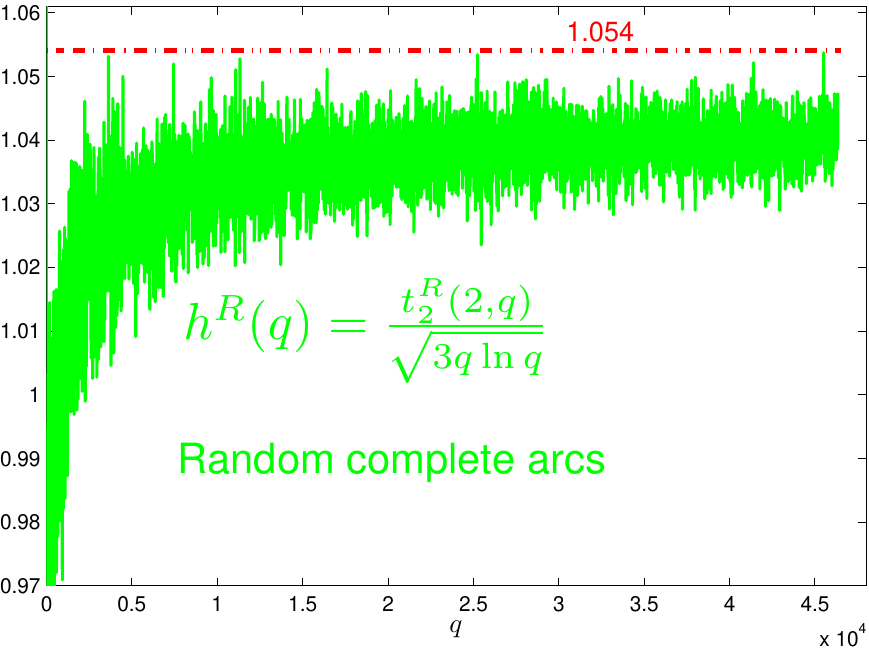}
\caption{\textbf{Values of
$h^{R}(q)=t^{R}_{2}(2,q)/\sqrt{3q\ln q}$ for complete random arcs in
$\mathrm{PG}(2,q)$, $q\le46337$, $q$ prime} (\emph{solid green curve}) and
the upper bound $y=1.054$ (\emph{dashed-dotted red line})}
\label{fig_7}
\end{figure}

It is useful to compare sizes of complete lexiarc and
 complete random arcs. The percentage difference
 $\Delta^{LR}(q)$
between sizes of complete lexiarcs and complete random arcs in
$\mathrm{PG}(2,q)$ for $q\le46337$, $q$ prime, is shown in
Figure \ref{fig_8} in the form
$$
\Delta^{LR}(q)=\frac{t_{2}^{L}(2,q)-t_{2}^{R}(2,q)}{t_{2}^{L}(2,q)\vphantom{H^{H{^H}}}}100\%
.$$
\begin{figure}[htbp]
\includegraphics[width=\textwidth]{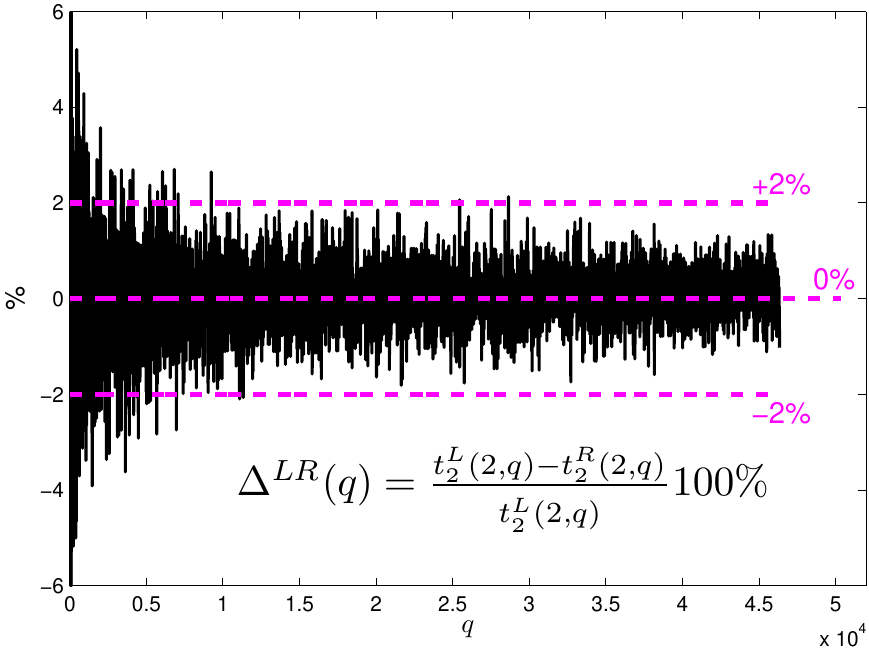}
\caption{\textbf{Percentage difference $\Delta^{LR}(q)$ between sizes
$t^{L}_{2}(2,q)$ of complete lexiarcs and sizes $t_{2}^{R}(2,q)$
of complete random arcs in $\mathrm{PG}(2,q)$}}
\label{fig_8}
\end{figure}

 One can see that the difference $\Delta^{LR}(q)$ is relatively small; it is in
the region $\thickapprox\pm2\%$. Moreover, the difference
$\Delta^{LR}(q)$ oscillates around the horizontal line $y=0$.
It means that, perhaps, \textbf{lexiarcs and random arcs have
the same nature.} This is expected, as the lexicographical
order of points is a random order in the geometrical sense.

\section{List of tables with sizes of complete lexiarcs in the projective plane
 $\mathrm{PG}(2,q)$}\label{sec_list tables}

\indent\textbf{Table 1.} The  sizes
 $t^{L}_{2}=t^{L}_{2}(2,q)$
 of complete lexiarcs in planes
 $\mathrm{PG}(2,q),$\\ $4\leq q\leq 316969$, $q$ non-prime. \textbf{p. 30}

 \textbf{Table 2.} The  sizes
 $t^{L}_{2}=t^{L}_{2}(2,q)$
 of complete lexiarcs in planes
 $\mathrm{PG}(2,q),$\\ $323761\leq q\leq 430007$, sporadic
 $q$. \textbf{p. 30}

\textbf{Table 3.} The sizes
 $t_{2}^{L}=t_{2}^{L}(2,q)$
 of complete lexiarcs in planes
 $\mathrm{PG}(2,q),$\\ $3\leq q< 10000$, $q$ prime power. \textbf{pp. 31--34}

 \textbf{Table 4.} The sizes
 $t_{2}^{L}=t_{2}^{L}(2,q)$
 of complete lexiarcs in planes
 $\mathrm{PG}(2,q),$\\ $10000< q< 100000$, $q$ 
 prime power. \textbf{pp. 35--56}

 \textbf{Table 5.} The sizes
 $t_{2}^{L}=t_{2}^{L}(2,q)$
 of complete lexiarcs in planes
 $\mathrm{PG}(2,q),$\\ $100000< q\leq 301813$, $q$ 
 prime power. \textbf{pp. 57--106}

 \textbf{Table 6.} The sizes
 $t_{2}^L=t_{2}^L(2,q)$
 of complete lexiarcs in planes
 $\mathrm{PG}(2,q),$\\ $301813< q\leq 321007$, $q$ prime power. \textbf{pp. 107--111}

\section{Conclusion} \label{sec_Concl}

This work contains tables of sizes of \textbf{small
complete arcs} in the projective plane
 $\mathrm{PG}(2,q)$ for \textbf{all} $q\le321007$, $q$ prime power, and  15 sporadic $q$'s in the interval $[323761\ldots$ $430007]$. These arcs are obtained with the
 help of a \textbf{step-by-step algorithm with fixed order of points
(FOP)}, see
\cite{BDFKMP-ArXiv2015FOP,BDFKMP-JGtoappear,ComputBound-Svetlog2014,BDFMP-JG2015,BDMP-Bulg2012b,BFMPD-ENDM2013,BDFMP-ArXivFOP}.
The algorithm FOP fixes a particular order on points of the
projective plane $\mathrm{PG}(2,q)$. In each step, the
algorithm FOP adds to an incomplete current arc the next point
in this order  not lying on bisecants of this arc. For arcs,
sizes of which are collected in this work, a
\textbf{lexicographical order of points} in the algorithm FOP
was used. Therefore these arcs are called
\textbf{\emph{lexiarcs}}.

In this work, \textbf{upper bounds on the smallest size
$t_{2}(2,q)$ of a complete arc in the projective plane
$\mathrm{PG}(2,q)$} are considered on the base of the sizes of
complete lexiarcs, collected in this work, and of the smallest
known (up to June 2015) sizes of complete arcs in $\mathrm{PG}(2,q)$ for
\textbf{all} $q\le160001$, $q$ prime power, collected in
\cite{BDFKMP-ArXiv2015Greedy}.

For $q\le321007$, the \emph{computer search}, the results of
which are used in this work, is \emph{complete}, i.e.\ it has
been performed for \emph{all} $q$ prime powers. This
\emph{proves} that the described upper bound  $t_{2}(2,q)<1.05\sqrt{3q\ln q}$
is valid, at least, in this region, see \eqref{eq1_Bounds1L} of Theorem~\ref{th1_main} and Figure \ref{fig_1}. Calculations
executed for sporadic $q\le430007$ strengthen  the confidence
in the validity of these bounds for large values of $q$, see also Figures
\ref{fig_1}, \ref{fig_2}, \ref{fig_3}, \ref{fig_6}. Moreover, the bounds of
Theorem~\ref{th1_main}
 are close to the
conjectural bounds of \cite{BDFKMP-PIT2014} cited in
 Conjecture~\ref{conj1}, see Figures~\ref{fig_4},~\ref{fig_5}. By all the arguments, we conjecture that
 the
bound $t_{2}(2,q)<1.05\sqrt{3q\ln q}$ holds for all
$q$, see Conjecture \ref{conj1_for all q}.

The most of the smallest known complete arcs, the sizes of which
are collected in \cite{BDFKMP-ArXiv2015Greedy}, are obtained by
computer search using randomized greedy algorithms \cite
{BDFKMP-JGtoappear,BDFMP-DM,BDFMP-JG2013,BDFMP-JG2015,DFMP-JG2005,DFMP-JG2009,DMP-JG2004}.
In each step, a step-by-step greedy algorithm adds to
    an incomplete current arc  a point providing the maximum
    possible (for the given step) number of new covered
    points.

Complete arcs obtained by greedy algorithms have smaller sizes
than complete lexiarcs, however the greedy algorithms take
essentially greater computer time than the algorithm FOP. This
is why the complete computer search for all  prime powers $q$
with the help of greedy algorithms is done for $q\le160001$
\cite{BDFKMP-ArXiv2015Greedy} whereas the complete search by
algorithm FOP is executed for $q\le321007$.

On the other hand, the difference between sizes of complete
lexiarcs and the smallest known sizes of complete arcs in
$\mathrm{PG}(2,q)$ is relatively small; it is $\lesssim6\%$ for
 $q\gtrapprox90000$, see \cite{BDFKMP-ArXiv2015FOP}. Therefore, \emph{for the computer
search with large $q$} the \emph{algorithm FOP} using the
lexicographical order of the points seems to be \emph{better
than greedy algorithms}.

Moreover, investigations of complete lexiarcs for large $q$
could help to understand the \textbf{enigma} connected with
oscillation (with decreasing amplitude) of the values $h^{L}(q)$
around a horizontal line (see Figure
\ref{fig_6}, Observation
\ref{obs2_m_lexi}, and Remark
\ref{rem4_enigma}).

It would be useful also to understand the structure of
lexiarcs, in particular, the initial part of a lexiarc that is
the same for all lexiarcs with greater $q$, see Subsection
\ref{subsec_start_lexiarc}.

Also, it would be  interesting to understand
       the working mechanism and to do quantitative estimates
       for the  step-by-step algorithm FOP, see \eqref{eq3_FOPalgorithm},
similarly to the work \cite{BDFKMP-PIT2014} where the working
mechanism of a greedy algorithm is treated.

Finally, further investigations of random arcs and their
``similarity'' to lexiarcs, see Figures~\ref{fig_7} and
\ref{fig_8}, would be useful.

This work can be considered as a continuation and development
of the paper \cite{BDFKMP-JGtoappear}.

\section{Appendix. Tables sizes of complete lexiarcs in the projective
plane $\mathrm{PG}(2,q)$}

\normalsize{\textbf{Table 1.} The  sizes
 $t^{L}_{2}=t^{L}_{2}(2,q)$
 of complete lexiarcs in planes
 $\mathrm{PG}(2,q),$ $4\leq q\leq 316969$,\\ $q$ non-prime\medskip}\\
\small{
\renewcommand{\arraystretch}{1.00}


\end{document}